\title{Equidistribution and inequalities for partitions into powers}
\author{Alexandru Ciolan}
\address{Max-Planck-Institut f\"ur Mathematik, Vivatsgasse 7, 53111 Bonn, Germany}
\email{ciolan@mpim-bonn.mpg.de}
\newtheorem{Thm}{Theorem}
\newtheorem{Lem}{Lemma}
\newcommand{\cal}{\mathcal}
\newcommand{\bb}{\mathbb}
\theoremstyle{remark}
\theoremstyle{definition}
\newtheorem{Con}{Conjecture}
\newtheorem{Quest}{Question}
\newtheorem*{xcom}{Commentary}
\newtheorem{rem}{Remark}
\DeclareMathOperator{\Arg}{Arg}
\DeclareMathOperator{\Log}{Log}
\let\@@pmod\pmod
\DeclareRobustCommand{\pmod}{\@ifstar\@pmods\@@pmod}
\def\@pmods#1{\mkern4mu({\operator@font mod}\mkern 6mu#1)}
\begin{document}
%\date{\today}
\begin{abstract} If $ p_k(a,m,n) $ denotes the number of partitions of $n$ into $k$th powers with a number of parts that is congruent to $ a $ modulo $m,$  then $p_2(0,2,n)\sim p_2(1,2,n)$ and the sign of the difference $p_2(0,2,n)- p_k(1,2,n)$ alternates with the parity of $n,$  as proven by recent work of the author (2020).~In this paper, we place the  problem in a broader framework. By analytic arguments using the circle method and Gauss sums estimates, we show that the same results hold for  any $ k\ge2. $ By combinatorial arguments, we show that the sign of the difference $p_k(0,2,n)- p_k(1,2,n)$ depends on the parity of $n$ for a larger class of partitions.   
%we prove that the 
%In this paper, we study partitions into powers with an odd and with an even number of parts. We show that the two quantities are equidistributed, and that the one which is bigger alternates according to the parity of $ n. $ This places in a more general frame a similar result established by the author (2020) for partitions into squares. By modifying a certain argument from that proof and by invoking an estimate on Gauss sums found by Banks and Shparlinski (2015) using the lower bounds on center density in the sphere packing problem due to Cohn and Elkies (2003), we generalize this result to partitions into higher powers.   
%We also provide an inversion law for the generating product associated to partitions into even powers.
\end{abstract}
\subjclass[2010]{11P82, 11P83}
\keywords{Asymptotics, circle method, Gauss sums, power partitions}
\maketitle

%\Alex: Here I continue the work started by Kathrin Bringmann and Karl Mahlburg in the pdf notes from \cite{BM}. Along the way, I correct several typos, double-check the proofs from \cite{BM} and, whenever needed, correct/elaborate on them. In {\color{blue}blue}, between square brackets, are my comments and contributions, whilst in {\color{red}red} are several things/typos I corrected from \cite{BM}. For convenience, in most places I kept also the original parts from \cite{BM}, with the original formulations, above or below which I added my comments. My most significant contributions are mainly towards the end of the paper. Section \ref{Euler-Maclaurin} is new and is an attempt at trying to see whether applying the Euler-Maclaurin summation formula gives us any useful results. Section \ref{Proving Conjecture 2} is also new and in there we try to prove the inequalities and asymptotics for the mod 3 case.
\section{Introduction and statement of results}\label{Intro}
\subsection{Motivation}
A \textit{partition} of a positive integer $ n $ is a non-increasing sequence (often written as a sum) of positive integers, called \textit{parts}, that add up to $ n. $ By $ p(n) $ we denote the number of partitions of $ n, $ and by convention we let $ p(0)=1. $ For example, $ p(4)=5 $ as the partitions of $ 4 $ are $ 4, $ $ 3+1, $ $ 2+2, $ $ 2+1+1, $ and $ 1+1+1+1, $ this being the case of \textit{unrestricted} partitions. One can consider, however, partitions with various conditions imposed on their parts, such as partitions with all parts being in a set $  S $ satisfying certain properties. If $S$ is any set (finite or infinite) of positive integers, we denote by $p_S(n)$ the number of partitions of $n$ into parts that all belong to the set $S.$ In the particular case when $S=\{n^k:n\in\bb N\}$ is the set of perfect $k$th powers,  with $k\in\bb N,$ we will use the shorthand notation $p_k(n)$ instead, and it is with such partitions that the current paper will mostly be concerned. Additionally, we let $p_S(m,n)$ denote the number of partitions of $n$  with exactly $m$ parts, all from $S,$ and $p_S(a,m,n)$ that of partitions of $n$ that have a numbers of parts congruent to $ a $ modulo $m$, all from $S.$ The quantities $p_k(m,n)$ and $ p_k(a,m,n) $ are defined in a similar fashion as explained above, in the special case when $S=\{n^k:n\in\bb N\}.$ 
Answering a conjecture formulated by Bringmann and Mahlburg \cite{BM}, the author  proved the following \cite{AC}. 
\begin{Thm}[{\cite{AC}}]\label{Conj1} As $ n\to\infty, $ we have	
$$p_2(0,2,n)\sim p_2(1,2,n)\sim\frac{p_2(n)}2$$
and
\begin{equation}\label{GenericInequality}\begin{cases}
p_2(0,2,n)>p_2(1,2,n) & \text{if~$ n $~is even,}\\
p_2(0,2,n)<p_2(1,2,n) & \text{if~$ n $ is odd}.
\end{cases}\end{equation}
\end{Thm}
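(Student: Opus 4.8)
The plan is to pass to generating functions and to isolate the difference $d_n:=p_2(0,2,n)-p_2(1,2,n)$ by a sign substitution. Writing $P(q)=\sum_{n\ge0}p_2(n)q^n=\prod_{j\ge1}(1-q^{j^2})^{-1}$ and recording the number of parts by an auxiliary variable set to $-1$ (so that an even, resp.\ odd, number of parts contributes with a $+$, resp.\ $-$, sign), one obtains $\sum_{n}d_nq^n=\prod_{j\ge1}(1+q^{j^2})^{-1}=:D(q)$. The elementary identity $\tfrac1{1+x}=\tfrac{1-x}{1-x^2}$ then yields the clean factorization
\[
D(q)=\prod_{j\ge1}\frac{1-q^{j^2}}{1-q^{2j^2}}=\frac{P(q^2)}{P(q)} .
\]
Since $p_2(0,2,n)+p_2(1,2,n)=p_2(n)$, the two assertions reduce to (i) $d_n=o(p_2(n))$, which forces $p_2(0,2,n)\sim p_2(1,2,n)\sim p_2(n)/2$, and (ii) $\sgn(d_n)=(-1)^n$. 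The key observation for (ii) is that $[q^n]D(-q)=(-1)^n d_n$, so it suffices to prove that $D(-q)=P(q^2)/P(-q)$ has positive Taylor coefficients.

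Next I would locate the dominant singularity. Setting $q=e^{-t}$ and using $(-1)^{j^2}=(-1)^j$, a Mellin-transform (or Euler--Maclaurin) analysis of $\log P(e^{-t})$, $\log P(e^{-2t})$ and $\log P(-e^{-t})$ as $t\to0^+$ shows that each behaves like a constant times $t^{-1/2}$, the constant being read off from the residue of $\Gamma(s)\zeta(s+1)\zeta(2s)$ at $s=\tfrac12$, namely $A=\tfrac{\sqrt\pi\,\zeta(3/2)}{2}$. Carrying out the cancellation in $\log D(-e^{-t})=\log P(e^{-2t})-\log P(-e^{-t})$ leaves a \emph{positive} leading term $\sim B\,t^{-1/2}$ with $B=\tfrac{\sqrt\pi\,\zeta(3/2)}{4\sqrt2}$, whereas the same computation for $D(e^{-t})=P(q^2)/P(q)$ produces a \emph{negative} leading coefficient. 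This is exactly why the mass of $D$ concentrates at $q=-1$ rather than at $q=1$, and it is the analytic source of the sign alternation. A saddle-point evaluation of $[q^n]D(-q)=\tfrac1{2\pi i}\oint D(-q)q^{-n-1}\,dq$, with the saddle on the positive real axis at $t\asymp n^{-2/3}$, then yields a main term of the shape $c\,n^{\kappa}\exp(Cn^{1/3})$ with $c>0$; comparing $C$ with the constant governing $p_2(n)$ gives $\log|d_n|\sim\tfrac12\log p_2(n)$, so that $d_n=o(p_2(n))$ and (i) follows immediately.

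To finish (ii) I would run the full circle method for $[q^n]D(-q)$, splitting the contour into a major arc about $q=1$ and minor arcs near the remaining roots of unity $e^{2\pi i h/k}$. On the major arc the saddle-point main term above is positive, and the task is to show that it dominates every other contribution in absolute value. Here the quadratic exponents $j^2$ make the local behavior of $P(q^2)$ and $P(-q)$ at $e^{2\pi i h/k}$ governed by Gauss sums $\sum_{r\bmod k}e^{2\pi i hr^2/k}$, whose standard $\sqrt k$ bounds deliver the exponential savings on the minor arcs. The main obstacle is making this estimate \emph{quantitatively} sharp enough to conclude the sign for \emph{all} $n$: because we need the strict inequality $(-1)^n d_n>0$ rather than a mere asymptotic, the minor-arc error must be provably smaller than the positive major-arc main term with explicit constants, after which the finitely many small $n$ can be settled by direct computation. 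I expect this uniform separation of the positive main term from the Gauss-sum error to be the delicate heart of the argument.
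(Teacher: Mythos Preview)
Your analytic outline is correct and is essentially the route taken in \cite{AC} and in the present paper's analytic proof: one works with $G_2(q)=D(-q)$, extracts the leading behaviour near $q=1$ by a Mellin argument (your constant $B=\tfrac{\sqrt\pi\,\zeta(3/2)}{4\sqrt2}$ agrees with the paper's $\lambda_{0,1}=A\Gamma(\tfrac12)\zeta(\tfrac32)$), runs a saddle-point on the major arc, and bounds the remaining arcs via Wright's transformation for $H_2(q)$ together with Gauss-sum estimates. Your remark that $\log|d_n|\sim\tfrac12\log p_2(n)$ is also right, and gives the equidistribution immediately. One technical point: rather than working directly with $D(-q)=P(q^2)/P(-q)$, the paper rewrites everything as a product in $q$ (namely $G_2(q)=H_2(q)H_2(q^{8})/H_2(q^{4})^2$) so that Wright's modular transformation for $H_2$ can be applied factor by factor; you would need to do something equivalent to make the ``Gauss sums control the minor arcs'' step precise.

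Where you and the paper diverge is exactly at the point you flag as ``the delicate heart of the argument''. The need for explicit constants plus a finite check can be sidestepped entirely by a one-line combinatorial factorisation that you did not find. Splitting the product over even and odd $j$ and using $(-q)^{j^2}=(-1)^jq^{j^2}$ gives
\[
D(-q)=\prod_{j\ge1}\frac{1}{1+(-q)^{j^2}}
=\prod_{m\ge1}\bigl(1+q^{2m^2}\bigr)\prod_{m\ge1}\bigl(1+q^{(2m-1)^2}\bigr),
\]
which visibly has nonnegative coefficients; thus $(-1)^n d_n\ge0$ for every $n$, and the strict inequality for large $n$ then follows from your (or the paper's) asymptotic $a_2(n)\sim C n^{\kappa}e^{Cn^{1/3}}$. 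This is the content of the paper's Theorem~\ref{MoreGeneralThm} specialised to $f(n)=n^2$, and it removes all of the quantitative burden you anticipated.
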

The statement of Theorem \ref{Conj1}, which is about partitions into squares, raises the natural question whether the same type of result holds for partitions into higher powers, or, more generally, into parts that are certain polynomial functions. Also, one might wonder whether similar results hold for moduli $ m>2. $
\subsection{Historical background}\label{history-sec} The earliest result of which the author is aware in the literature goes back to 1876 and is due to Glaisher \cite{Glaisher}, who proved   that $p_1(0,2,n)-p_1(1,2,n)=(-1)^n p_{\rm o}(n),$ where $ p_{\rm o}(n) $ counts the partitions of $ n $ into odd parts without repetitions. To compute asymptotics for $p(n),$ Hardy and Ramanujan  designed the famous \textit{circle method}, a breakthrough of their times,  while Wright \cite{WrightIII} improved on their method and computed asymptotics for $p_k(n).$ Roth and Szekeres \cite{RothSzek} computed asymptotics for $ p_U(n) $ in the case when $U=\{u_n\}_{n\ge1}$ is a sequence of positive integers which is increasing for $n\ge n_0$ and which satisfies a few growth properties, see conditions (I)--(II) from \cite[p. 241]{RothSzek}, under the restriction that no repeated parts are allowed. Liardet and Thomas \cite{Liardet} computed $p_U(n)$ while removing this condition and allowing repetitions. An example of such a set $ U=\{u_n\} $ is given by $ u_n=f(n), $ where $ f$ is a polynomial which takes only integral values for $x\in\bb Z$ and has the property that  for 
every prime $p$ there exists an integer $x$ such that $p\nmid f(x).$ Certainly, $f(n)=n^k$ is such a polynomial, for any $k\in\bb N.$  Most recently, Zhou \cite{Zhou} proved that if $U=\{f(n)\}_{n\ge1},$  with $f:\bb N\to \bb N$  a polynomial function satisfying similar  conditions to those from \cite{RothSzek}, then $$p_U(a,m,n)\sim \frac{p_U(n)}{m}$$ holds uniformly  as $n\to \infty$ for all $ m=o\Big(n^{\frac1{2+2\deg(f)}}(\log n)^{-\frac12}\Big), $  for any $0\le a\le m-1.$ However,  the methods from \cite{Zhou} do not seem to help in proving inequalities such as \eqref{GenericInequality}.
\subsection{Statement of results}The purpose of this paper is two-fold.~First, we prove that Theorem \ref{Conj1} extends, indeed, to partitions into perfect $k$th powers, for any $k\ge2.$ Second, we want to see for what other, more general, types of partitions do inequalities of the form \eqref{IneqsPowers} hold true. For these purposes, we would like to give both an analytic and a combinatorial proof to Theorem \ref{ThmPowers}, since we believe that the two approaches, independent of one another, are instructive in their own right. 
\begin{Thm}\label{ThmPowers} 	
For any $ k\ge2 $ we have, as $ n\to\infty, $	
\begin{equation}\label{Equidistri}
p_k(0,2,n)\sim p_k(1,2,n)\sim\frac{p_k(n)}{2}\end{equation} 
and
\begin{equation}\label{IneqsPowers}
\begin{cases}
p_k(0,2,n)>p_k(1,2,n) & \text{if~$ n $~is even,}\\
p_k(0,2,n)<p_k(1,2,n) & \text{if~$ n $ is odd}.
\end{cases}\end{equation}
\end{Thm}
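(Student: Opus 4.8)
My plan is to reduce both assertions to a single coefficient extraction and then attack it with the circle method. Marking each part by a variable $\zeta$ in the generating function for partitions into $k$th powers gives $\prod_{j\ge1}(1-\zeta q^{j^k})^{-1}=\sum_{m,n}p_k(m,n)\zeta^m q^n$. Setting $\zeta=1$ recovers $\sum_n p_k(n)q^n$, while $\zeta=-1$ yields
$$G_k(q):=\prod_{j\ge1}\frac1{1+q^{j^k}}=\sum_{n\ge0}D_k(n)q^n,\qquad D_k(n):=p_k(0,2,n)-p_k(1,2,n).$$
Since $p_k(0,2,n)+p_k(1,2,n)=p_k(n)$, I have $p_k(a,2,n)=\tfrac12\big(p_k(n)+(-1)^aD_k(n)\big)$, so \eqref{Equidistri} is equivalent to $D_k(n)=o(p_k(n))$ and \eqref{IneqsPowers} is equivalent to $(-1)^nD_k(n)>0$ for all large $n$. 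Everything therefore hinges on the size and sign of the single sequence $D_k(n)=[q^n]G_k(q)$.

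To expose the sign, I would move the relevant singularity to the positive real axis by the substitution $q\mapsto-q$. Writing $q^{j^k}=(-1)^{j^k}q^{j^k}$ according to the parity of $j$ (note $j^k\equiv j\pmod2$) gives $D_k(n)=(-1)^n[q^n]\widetilde G_k(q)$ with
$$\widetilde G_k(q)=\prod_{j\ \mathrm{odd}}\frac1{1-q^{j^k}}\ \prod_{j\ \mathrm{even}}\frac1{1+q^{j^k}},$$
so it suffices to prove $[q^n]\widetilde G_k(q)>0$ for large $n$. Setting $q=e^{-t}$ and comparing sums with integrals, the odd factors produce a genuine pole-type singularity at $q=1$ while the even factors only decay, and a short computation gives
$$\log\widetilde G_k(e^{-t})\sim\tfrac12(A_k-B_k)\,t^{-1/k},\qquad A_k=\int_0^\infty-\log(1-e^{-u^k})\,du,\quad B_k=\int_0^\infty\log(1+e^{-u^k})\,du.$$
Since $A_k-B_k=-\int_0^\infty\log(1-e^{-2u^k})\,du=2^{-1/k}A_k>0$, the function $\widetilde G_k$ blows up at $q=1$, and the Hardy--Ramanujan--Wright saddle-point analysis then furnishes a main term for $[q^n]\widetilde G_k(q)$ that is a positive exponential times a positive prefactor. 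This yields $(-1)^nD_k(n)>0$, i.e.\ \eqref{IneqsPowers}.

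Equidistribution \eqref{Equidistri} falls out of the same analysis by comparing exponents. The generating function of $p_k(n)$ has singular exponent $A_kt^{-1/k}$ at $q=1$, whereas the dominant singularity of $G_k$ (at $q=-1$, equivalently of $\widetilde G_k$ at $q=1$) carries the strictly smaller exponent $\tfrac12(A_k-B_k)=2^{-1/k-1}A_k$. Hence $D_k(n)$ grows at a strictly slower exponential rate than $p_k(n)$, so $D_k(n)=o(p_k(n))$ and $p_k(a,2,n)\sim p_k(n)/2$.

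The main obstacle is to \emph{justify} the circle method decomposition rigorously: I must show that $q=1$ is the unique dominant singularity of $\widetilde G_k$ and bound the contributions of the remaining major arcs and of the minor arcs. Near a root of unity $e^{2\pi ih/\ell}$, grouping the indices $j$ into residue classes modulo $\ell$ shows that the singular exponent is governed by the complete exponential sums $\sum_{r\bmod\ell}e^{2\pi ihr^k/\ell}$, which are Gauss sums when $k=2$ and Weyl sums when $k\ge3$. The full-circle average of the exponent density vanishes, so for large $\ell$ equidistribution of $\{hr^k/\ell\}$ forces the exponent below the value at $\ell=2$; the finitely many small $\ell$ are checked directly, with $\ell=2$ (that is, $q=-1$) winning. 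Turning these heuristics into uniform bounds strong enough to dominate the error terms in the circle method---and, for $k\ge3$, doing so with only the weaker Weyl-sum estimates in place of exact Gauss sums---is the crux of the argument.
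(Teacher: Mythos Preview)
Your reformulation and main-term analysis are correct and coincide with the paper's: the same substitution $q\mapsto-q$ is made (the paper calls the resulting product $G_k(q)$), the same leading singularity $A\Gamma(1/k)\zeta(1+1/k)\tau^{-1/k}$ with $A=\tfrac1k2^{-(k+1)/k}$ is obtained (your $\tfrac12\cdot2^{-1/k}A_k$ equals this), and the saddle-point evaluation of the principal arc is then carried out as you describe. Your exponent comparison for equidistribution is also what the paper does.

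The gap is exactly where you locate it, but your proposed resolution would not go through. You plan to treat the non-principal major arcs by ``checking the finitely many small $\ell$ directly'' and invoking equidistribution of $\{hr^k/\ell\}$ for large $\ell$. This is precisely the strategy the paper says cannot be made uniform in $k$: for $k=2$ that finite check was already a delicate numerical verification in the predecessor paper, and the Commentary after Lemma~\ref{biglemma} states that it ``certainly cannot be carried out for all $k\ge2$''. Also, for $k\ge3$ the complete sums $\sum_{r\bmod\ell}e^{2\pi ihr^k/\ell}$ are not handled by the weak Weyl-differencing bounds you allude to; what is actually required is the sharp estimate $|S_k(a,b)|\le\mathcal A\,b^{1-1/k}$ of Banks--Shparlinski (or at least Vaughan's $O_k(b^{1-1/k})$), valid for \emph{every} modulus $b\ge2$. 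The paper feeds this bound into Wright's modular transformation for $H_k(q)$ near each rational point and combines it with a limiting argument as $y\to0$ (Lemma~\ref{biglemma}) to show ${\rm Re}(\lambda_{a,b}\tau'^{-1/k})\le(\lambda_{0,1}-c)y^{-1/k}$, thereby eliminating any case-by-case comparison of roots of unity. Replacing the finite check by this uniform Gauss-sum bound is the paper's main analytic contribution.

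Two minor points. In your last paragraph ``$\ell=2$ (that is, $q=-1$) winning'' should read $\ell=1$: after your substitution the dominant singularity of $\widetilde G_k$ is at $q=1$. And the paper also supplies an independent combinatorial proof, via the factorisation
\[
\widetilde G_k(q)=\prod_{n\ge1}\frac{1-q^{2^kn^k}}{1-q^{2n^k}}\ \prod_{n\ge1}\bigl(1+q^{(2n-1)^k}\bigr),
\]
which exhibits $(-1)^nD_k(n)$ as a convolution of nonnegative sequences and hence gives the inequalities for \emph{every} $n\ge1$, not only asymptotically.
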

The analytic approach relies on Wright's modular transformations for partitions into $k$th powers \cite{WrightIII}, on a modification of Meinardus's Theorem on asymptotics of infinite product generating functions which combines the circle and the saddle-point method, and on estimates of exponential Gauss sums; in particular, we invoke a bound that was established by Banks and Shparlinski \cite{BS} with the (somewhat unexpected and surprising) help of the effective lower estimates on center density found by Cohn and Elkies \cite{CE} in their work on the sphere packing problem. \par We find the connection between our partition question and the sphere packing problem to be rather interesting, and it is also this precise step that allows for a generalization to $k\ge2$ of the argument given in \cite{AC} for dealing with the case $k=2.$  While the equidistribution statement follows as a particular case of Corollary 1.2 from \cite{Zhou}, our argument, which is independent from that in \cite{Zhou}, proves both the equidistribution and the inequalities at the same time.
\par The combinatorial approach simplifies our work greatly in establishing the inequalities \eqref{IneqsPowers}, but it is not of much help in proving equidistribution results, at least not in a more general framework. Nevertheless, it allows us to prove the following.
\begin{Thm}\label{MoreGeneralThm}
Let $ f:\bb N\to \bb N $  be an increasing function such that 
\begin{enumerate}[{\rm a)}]\item $f(1)=1;$
\item $f(n)$ is odd if $n$ is odd$;$
\item $f(2n)=2\alpha f(n),$ for any $n\in\bb N$ and any fixed  $\alpha\in \bb N$.
\end{enumerate} If $S=\{f(n)\}_{n\ge1},$ then 
\begin{equation}\label{IneqsMoreGeneral}
\begin{cases}
p_S(0,2,n)>p_S(1,2,n) & \text{if~$ n $~is even,}\\
p_S(0,2,n)<p_S(1,2,n) & \text{if~$ n $ is odd}.
\end{cases}\end{equation}
\end{Thm}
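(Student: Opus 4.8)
The plan is to pass to generating functions and reduce the claim to a positivity statement. Marking each part by a variable and specializing it to $-1$ gives
$$\sum_{n\ge0}\big(p_S(0,2,n)-p_S(1,2,n)\big)q^n=\prod_{s\in S}\frac{1}{1+q^s}=:D(q),$$
since a partition with $\ell$ parts is counted with sign $(-1)^\ell$. Thus \eqref{IneqsMoreGeneral} is exactly the assertion that $(-1)^n[q^n]D(q)\ge0$ for all $n$, i.e. that $D(-q)=\prod_{s\in S}(1+(-q)^s)^{-1}$ has non-negative coefficients (with strict positivity for large $n$). First I would record what the hypotheses buy: by a) we have $1=f(1)\in S$; by b) the set $S_{\mathrm{odd}}:=\{f(2m-1):m\ge1\}$ consists of odd integers, while by c) the set $S_{\mathrm{even}}:=\{f(2m):m\ge1\}$ consists of even integers and satisfies $S_{\mathrm{even}}=2\alpha S$. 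In $D(-q)$ an odd part $s$ contributes $(1-q^s)^{-1}$ and an even part $(1+q^s)^{-1}$, so
$$D(-q)=\prod_{s\in S_{\mathrm{odd}}}\frac{1}{1-q^s}\ \prod_{s\in S_{\mathrm{even}}}\frac{1}{1+q^s}.$$

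Next I would exploit the self-similarity $S_{\mathrm{even}}=2\alpha S$. Iterating $S=S_{\mathrm{odd}}\sqcup 2\alpha S$ and using that $f$ is injective (so division by $2\alpha$ is unambiguous) yields the disjoint decomposition $S=\bigsqcup_{j\ge0}(2\alpha)^j S_{\mathrm{odd}}$: every $s\in S$ is uniquely of the form $(2\alpha)^j u$ with $u\in S_{\mathrm{odd}}$ odd and $j\ge0$, the part being even precisely when $j\ge1$. Regrouping the two products above according to the odd core $u$ factors the series completely,
$$D(-q)=\prod_{u\in S_{\mathrm{odd}}}g(q^u),\qquad g(x):=\frac{1}{1-x}\prod_{j\ge1}\frac{1}{1+x^{(2\alpha)^j}},$$
so the whole problem collapses to the single-variable claim that $g$ has non-negative coefficients, this positivity then being inherited by the product.

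This lemma is the step I expect to be the crux, and I would prove it by a base-$b$ telescoping with $b:=2\alpha$. Writing $\tfrac{1}{1-x}=\prod_{j\ge0}\big(1+x^{b^j}+\cdots+x^{(b-1)b^j}\big)$ and pairing, for each $j\ge1$, the block $1+\cdots+x^{(b-1)b^j}=\tfrac{1-x^{b^{j+1}}}{1-x^{b^j}}$ with the factor $(1+x^{b^j})^{-1}$ produces $\tfrac{1-x^{b^{j+1}}}{1-x^{2b^j}}$. Here the evenness of $b=2\alpha$ is decisive: since $b^{j+1}=\alpha\cdot 2b^j$, this equals $\sum_{i=0}^{\alpha-1}x^{2i b^j}$, a polynomial with coefficients in $\{0,1\}$. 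Leaving the $j=0$ block untouched, this gives
$$g(x)=\Big(\sum_{d=0}^{2\alpha-1}x^{d}\Big)\prod_{j\ge1}\Big(\sum_{i=0}^{\alpha-1}x^{2i(2\alpha)^{j}}\Big),$$
a product of polynomials with non-negative coefficients. Hence $D(-q)$ has non-negative coefficients, which is the claimed sign pattern $(-1)^n\big(p_S(0,2,n)-p_S(1,2,n)\big)\ge0$. It is precisely b)--c), the oddness of the odd-indexed values together with the even dilation factor $2\alpha$, that make the telescoping close up.

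Finally, for the strict inequalities I would note that equality $p_S(0,2,n)=p_S(1,2,n)$ can genuinely occur for small $n$ (already in the square case, at $n=4$), so the real content is that $[q^n]D(-q)>0$ for all sufficiently large $n$. This I would deduce from $D(-q)=\prod_u g(q^u)$ by checking that the coefficient support is cofinite: the factor from $u=1$ already supplies the consecutive residues $0,1,\dots,2\alpha-1$, and combining these with the sparser exponents produced by the remaining $u\in S_{\mathrm{odd}}$ covers every large $n$ (in the power case $f(n)=n^k$ the strict positivity for large $n$ is in any event furnished by Theorem \ref{ThmPowers}). The only delicate point in the entire scheme is the positivity of $g$, and the rest is bookkeeping.
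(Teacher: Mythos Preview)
Your argument is correct, but you take a longer path than the paper does. Both proofs pass to the generating function $D(-q)=\prod_{s\in S}(1+(-q)^s)^{-1}$ and show it has non-negative coefficients by rewriting it as a product of manifestly non-negative pieces; the difference is in how much work goes into that rewriting. The paper applies condition~c) \emph{once}: splitting the numerator of $\prod_{n}\frac{1-(-q)^{f(n)}}{1-q^{2f(n)}}$ by the parity of the index and using $f(2n)=2\alpha f(n)$ gives directly
\[
D(-q)=\prod_{n\ge1}\frac{1-q^{2\alpha f(n)}}{1-q^{2f(n)}}\;\prod_{n\ge1}\bigl(1+q^{f(2n-1)}\bigr),
\]
and since $\frac{1-q^{2\alpha s}}{1-q^{2s}}=1+q^{2s}+\cdots+q^{2(\alpha-1)s}$, both factors visibly have non-negative coefficients. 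You instead iterate c) infinitely to decompose $S=\bigsqcup_{j\ge0}(2\alpha)^j S_{\mathrm{odd}}$, regroup by odd core to get $D(-q)=\prod_{u\in S_{\mathrm{odd}}}g(q^u)$, and then prove positivity of the single-variable series $g$ by a base-$2\alpha$ telescoping. This is a genuine refinement---your identity for $g$ is finer than the paper's two-factor split (indeed, grouping the paper's product by odd core recovers exactly your formula for $g$)---but it is more machinery than the claim requires. What you gain is an explicit digit-style product for $g$ that could be useful for sharper coefficient information; what the paper gains is brevity. On the strict-inequality point you and the paper are in agreement: the combinatorial argument yields only $a_S(n)\ge0$, and equality can occur for small $n$ (your example $n=4$ with $k=2$ is apt), with strict positivity for large $n$ supplied by the asymptotics.
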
 
%In other words, we prove that  the number of partitions of $ n $ into $k$th powers with an even number of parts is greater than that with an odd number of parts if $ n $ is even, and conversely if $ n $ is odd, and that the two quantities are asymptotically equal as $ n\to\infty. $ The last claim will follow easily from our proof, but it is also a straightforward consequence of the aforementioned work of Zhou \cite{Zhou}, who proved that
%\[p_f(a,k,n)\sim\frac{p_f(n)}k\]   
%holds uniformly,  as $ n\to\infty, $ for all $ a,k,n\in \bb N $ with $ k^{2+2\deg(f)}\ll n, $ where $ f\in\bb Q[x] $ is any non-constant polynomial such that the values $ f(n)  $ are coprime positive integers, $ p_f(n) $ denotes the number of partitions of $ n $ with parts from the set $ \cal S=\{f(n):n\in\bb N\} $ and $ p_f(a,k,n) $ that of partitions of $ n  $ with parts from $ \cal S $ having a number of parts congruent to $ a $ modulo $ k. $ The asymptotic equidistribution stated in Theorem \ref{ThmPowers} is a consequence of this result for $ f(x)=x^k. $ (As $ f\in\bb Q[x] $ is a non-constant polynomial, we believe there is no reason for confusion between this notation and $ p_k(n), $ used to denote partitions into $k$th powers.)
%A natural question that occurs is whether the alternating inequalities hold 

As it is easy to see that the power functions $f_k(n)=n^k$ satisfy the conditions of Theorem \ref{MoreGeneralThm}, the inequalities \eqref{IneqsPowers} follow as an immediate consequence of the above result.

\subsection{Outline}
\par The paper is structured as follows. In Section \ref{Philosophy} we use generating functions to give a reformulation of our problem. In Section \ref{combiproof} we discuss the combinatorial approach and we give the proof of Theorem \ref{MoreGeneralThm}. In Section \ref{analyticstrategy} we present the strategy of the analytic proof and we discuss the similarities and differences with the  proof of the same result from \cite{AC} in the case $ k=2 $. This will also be done, throughout the paper, in the form of commentaries at the end of the relevant sections. We consider this to be for the benefit of the reader interested in comparing the present paper with \cite{AC}.  In Sections \ref{MainTerm} and \ref{ErrorTerm} we prove two estimates which, combined, will provide the analytic proof of Theorem \ref{ThmPowers}, given in Section \ref{Proof of Theorem 2}. In Section \ref{Open} we propose some open problems and future research directions.

\section{A reformulation}\label{Philosophy}
%\section{Partitions with a number of parts in certain congruence classes}\label{Inequalities for partitions with a fixed number of parts}
%In view of what has already been mentioned, it is only of interest to us to prove the asymptotic inequalities from Theorem \ref{ThmPowers}. In doing so, we will first reformulate the claim of our problem so that it becomes equivalent with proving that the coefficients of a certain generating function are positive.
%\subsection{Preliminaries} 
\subsection{Notation} 
Before proceeding any further, let us introduce some notation used in the sequel. By $ \zeta_n=e^{\frac{2\pi i}{n}} $ we will denote the standard primitive $ n $th root of unity.  For reasons of space, we will sometimes use $ \exp (z) $ instead of $ e^z. $  Whenever required to take logarithms or to extract   roots of complex numbers, we will use principal branches,
and the principal branch of the complex logarithm will be denoted by $ \Log $. The Vinogradov symbols $ o,$  $O $ and $ \ll $ are used throughout with their standard meaning. 
\subsection{Generating functions}\label{reformulation} It is well-known (see, for example, \cite[Ch. 1]{And}) that 
\begin{equation}\label{productpowers}
\prod_{n=1}^{\infty}(1-q^{ n^k} )^{-1}=\sum_{n=0}^{\infty}p_k(n)q^n,\end{equation} where, as usual, for $ \tau\in\bb H $ (the upper half-plane)  we set $q=e^{2\pi i\tau}. $ 
Letting
%\begin{align*}H_k(q)&=\sum_{n=0}^{\infty}p_k(n)q^n,\\
%H_k(w;q)&=\sum_{m=0}^{\infty}\sum_{n=0}^{\infty}p_k(m,n)w^mq^n,\\
%H_{k,a,m}(q)&=\sum_{n=0}^{\infty}p_k(a,m,n)q^n,
%\end{align*}
\[H_k(q)=\sum_{n=0}^{\infty}p_k(n)q^n,\quad H_k(w;q)=\sum_{m=0}^{\infty}\sum_{n=0}^{\infty}p_k(m,n)w^mq^n,\quad H_{k,a,m}(q)=\sum_{n=0}^{\infty}p_k(a,m,n)q^n, \]
it is not difficult to see, by the orthogonality relations for roots of unity, that

\begin{equation}\label{orthogonal}
H_{k,a,m}(q)=\frac 1m H_k(q)+\frac 1m \sum_{j=1}^{m-1}\zeta_m^{-aj}H_k(\zeta_m^j;q).
\end{equation}
Using, in turn, \eqref{orthogonal} and eq. (2.1.1) from \cite[p.~16]{And}, we obtain  
\[H_{k,0,2}(q)-H_{k,1,2}(q)=H_{k}(-1;q)=\prod_{n=1}^{\infty}\frac{1}{1+q^{n^k}},\]
and, on substituting $ q\mapsto -q, $ we have
\begin{equation}\label{FirstRewriteProduct}H_{k}(-1;-q)=\prod_{n=1}^{\infty}\frac{1}{1+(-q)^{n^2}}=\prod_{n=1}^{\infty}\frac{1}{( 1+q^{2^kn^k}) ( 1-q^{(2n+1)^k}) }=\prod_{n=1}^{\infty}\frac{( 1-q^{2^kn^k}) ^2}{( 1-q^{2^{k+1}n^k}) ( 1-q^{n^k}) }.\end{equation}
On noting now that
\[H_k(-1;-q)=H_{k,0,2}(-q)-H_{k,1,2}(-q)=\sum_{n=0}^{\infty}a_k(n)q^n,\] where 
%\[H_{k,0,2}(-q)-H_{k,1,2}(-q)=\sum_{n=0}^{\infty}a_k(n)q^n,\] where
\begin{equation*}\label{a2fromp2}
a_k(n)=\begin{cases}
p_k(0,2,n)-p_k(1,2,n) & \text{if~$ n $~is even;}\\
p_k(1,2,n)-p_k(0,2,n) & \text{if~$n$~is odd,}
\end{cases}
\end{equation*} we see that
proving the inequalities from Theorem \ref{ThmPowers} is equivalent to showing that the coefficients $ a_k(n)$ of the infinite product $H_k(-1;-q),$ expressed as a $ q$-series, are positive as $ n\to\infty. $ For simplicity, we will denote $G_k(q)=H_k(-1;-q).$
%
%from where, by setting
%\[G_k(q)=H_{k,0,2}(-q)-H_{k,1,2}(-q),\]
%we get
%\begin{equation}
%\label{GfromH}
%G_k(q)=\prod_{n=1}^{\infty}\frac{(1-q^{2^kn^k})^2}{(1-q^{2^{k+1}n^k})(1-q^{n^k})}=\sum_{n=0}^{\infty}a_k(n)q^n.
%\end{equation}
%In order to prove the inequalities \eqref{IneqsPowers}, we need to show that the coefficients $ a_k(n) $ of the infinite product $ G_k $, expressed as a $ q $-series, are positive for any $ n $ large enough. 
In order to extract more information about the coefficients $a_k(n),$ one might try to compute them asymptotically. Indeed, this is the motivation of our analytic approach, and this is what we are going to do in Sections \ref{analyticstrategy}--\ref{Proof of Theorem 2}.

\section{A combinatorial approach}\label{combiproof}
The attentive reader (and especially the reader familiar with partition generating functions) might have noticed that all the previous identities hold not only for partitions into parts from the set $\{n^k:n\in\bb N\}$ but, more generallly, for any set $S\subseteq\bb N$ of positive integers. Therefore, all steps in Section \ref{reformulation} can be redone with any set $S$ instead of the set of $k$th powers. In this regard, identity \eqref{productpowers} would become 
\[\prod_{n\in S}(1-q^{ n} )^{-1}=\sum_{n=0}^{\infty}p_S(n)q^n,\] 
and all the other identities following it, including the orthogonality relation \eqref{orthogonal},  would turn into their corresponding analogues. More precisely, for a given (infinite) set $S\subseteq\bb N,$ if we let
\begin{equation*} 
a_S(n)=\begin{cases} 
p_S(0,2,n)-p_S(1,2,n) & \text{if~$ n $~is even;}\\
p_S(1,2,n)-p_S(0,2,n) & \text{if~$n$~is odd,}
\end{cases} 
\end{equation*} and  \[H_S(w;q)=\sum_{m=0}^{\infty}\sum_{n=0}^{\infty}p_S(m,n)w^mq^n,\]
then we obtain \[H_S(-1;-q)=\prod_{n\in S}\frac1{1+(-q)^n}=\sum_{n=0}^{\infty}a_S(n)q^n. \]
In Section \ref{reformulation}, in the particular case when $S=\{n^k:n\in\bb N\},$ we expressed the above product as shown in \eqref{FirstRewriteProduct}.
However, we can express this product in another way. Indeed, if $S=\{f(n):n\in\mathbb N\}$ with $ f:\bb N\to\bb N $ as in Theorem \ref{MoreGeneralThm}, we have 
\begin{align}
\label{SecondRewriteProd}
H_S(-1;-q)&=\prod_{\ell\in S}\frac{1-(-q)^{\ell}}{1-(-q)^{2\ell}}=\prod_{n\ge1}\frac{1-(-q)^{f(n)}}{1-(-q)^{2f(n)}}\nonumber\\&=\prod_{n\ge1}\frac{1-(-q)^{f(2n)}}{1-q^{2f(n)}}\prod_{n\ge1}\big(1-(-q)^{f(2n-1)}\big)\nonumber\\&=\prod_{n\ge1}\frac{1-q^{f(2n)}}{1-q^{2f(n)}}\prod_{n\ge1}\big(1+q^{f(2n-1)}\big),
\end{align}where the last identity follows from the fact that $f$ satisfies properties b) and c) of Theorem \ref{MoreGeneralThm}. In light of c), we further have 
\begin{equation}\label{LiardetTrick}
H_S(-1;-q)=\prod_{n\ge1}\frac{1-q^{2\alpha f(n)}}{1-q^{2f(n)}}\prod_{n\ge1}\big(1+q^{f(2n-1)}\big).
\end{equation}
\begin{proof}[Proof of Theorem \ref{MoreGeneralThm}]Regarded as a series in $q^2,$ the first product in \eqref{LiardetTrick} counts partitions into parts from $S$ with at most $ \alpha $ possible repetitions, while the second product, regarded as a series in $q,$ counts partitions into parts from $T=\{f(2n-1):n\in\bb N\}$ with at most one possible repetition (i.e., without repeated parts). Therefore, we obtain
\begin{equation}\label{recurrence}
a_S(n)=\sum_{0\le k\le n}c_{S,\alpha}(k)d_{T,1}(n-k),\end{equation}
where $c_{S,\alpha}(k)  $ is the number of partitions of $k$ into parts from $S$ with at most $\alpha$ possible repetitions, while $d_{T,1}(k)$ is the number of partitions of $k$ into parts from $T$ with at most one possible repetition (that is, without repeated parts). It is now clear that $a_S(n)\ge0$ for any $n\ge1,$ proving the claim.  \end{proof} 
\begin{proof}[Combinatorial proof of Theorem \ref{ThmPowers}]
As the sets $ S_k=\{n^k:n\in\bb N\} $ satisfy the hypotheses of Theorem \ref{MoreGeneralThm} for any $ k\ge2 $ (and, in fact, for any $ k\ge1 $) the inequalities \eqref{IneqsPowers} follow as an easy consequence of Theorem \ref{MoreGeneralThm}. The asymptotics of both $ c_{S,\alpha}(k) $ and $ d_{T,1}(k) $ can be obtained as special cases of the work of Liardet and Thomas, see Theorem 14.2 in \cite{Liardet}. Since the asymptotics will explicitly follow, later in this paper, from our analytic proof of Theorem \ref{ThmPowers}, we leave it as an exercise to the interested reader to derive asymptotics for $ a_S(n) $ using \eqref{recurrence} and the results of \cite{Liardet}, and to thus prove the equidistribution.
\end{proof}
\section{Strategy of the analytic approach}\label{analyticstrategy}
Having discussed the combinatorial approach to Theorems \ref{ThmPowers} and \ref{MoreGeneralThm}, we will focus from now on solely on analytic aspects and on partitions into $k$th powers. For this, we recall the first representation of the product $G_k(q),$ given in \eqref{FirstRewriteProduct}, which says that 
\begin{equation*}
%\label{GfromH}
G_k(q)=\prod_{n=1}^{\infty}\frac{( 1-q^{2^kn^k}) ^2}{( 1-q^{2^{k+1}n^k}) ( 1-q^{n^k}) }=\sum_{n=0}^{\infty}a_k(n)q^n.\end{equation*} 
What we want to do, and what we will indeed do in what follows, is to compute asymptotics for the coefficients $ a_k(n) $ and to prove that they are positive.

\subsection{Meinardus's Theorem} The reader familiar with asymptotics of infinite product generating functions might have already recognized  the similarity between the infinite product expression for $G_k(q)$ and the one studied by Meinardus in \cite{Mein}, which, on writing $ q=e^{-\tau} $ with $ {\rm Re}(\tau)>0, $ is of the form 
\begin{equation*}
\label{rnmeinardus}
F(q)=\prod_{n=1}^{\infty}(1-q^n)^{-a_n}=\sum_{n=0}^{\infty}r(n)q^n,
\end{equation*} 
with $ a_n\ge 0. $ Under certain assumptions on which we do not elaborate now, Meinardus found asymptotic formulas for the coefficients $ r(n). $ More precisely, if the Dirichlet series $$ D(s)=\sum_{n=1}^{\infty}\frac{a_n}{n^s}\quad(s=\sigma
+it) $$ converges for $ \sigma>\alpha>0 $ and admits a meromorphic continuation to the region $ \sigma>-c_0~(0<c_0<1), $ region in which $ D(s) $ is holomorphic everywhere except for a simple pole at $ s=\alpha $ with residue $ A, $ then the following holds. 
\begin{Thm}[{Andrews \cite[Ch. 6]{And}, cf. Meinardus \cite{Mein}}]\setcounter{Thm}{1} 
\label{Meinardus}
As $ n\to\infty, $ we have
\[r(n)=cn^{\kappa}\exp\left( n^{\frac{\alpha}{\alpha+1}}\left(1+\frac{1}{\alpha}\right) ( A\Gamma(\alpha+1)\zeta(\alpha+1))^{\frac{1}{\alpha+1}}    \right)(1+O(n^{-\kappa_1}) ),  \]
where 
\begin{align*}
c & =  e^{D'(0)}\left(  2\pi(\alpha+1)\right) ^{-\frac12}\left(A\Gamma(\alpha+1)\zeta(\alpha+1) \right)^{\frac{1-2D(0)}{2+2\alpha}},\\
\kappa & =  \frac{2D(0)-2-\alpha}{2(\alpha+1)},\\
\kappa_1 & =  \frac{\alpha}{\alpha+1}\min \left\lbrace \frac{c_0}{\alpha}-\frac{\delta}{4},\frac12-\delta \right\rbrace,  
\end{align*}
with $ \delta>0 $ arbitrary.
\end{Thm}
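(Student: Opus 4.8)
The plan is to prove Theorem \ref{Meinardus} by the saddle-point method, extracting $r(n)$ from its generating function via Cauchy's formula and estimating the resulting contour integral. Writing $q=e^{-\tau}$ with $\tau=r+iy$ and $r>0$ small, I would begin from
\[r(n)=\frac{1}{2\pi}\int_{-\pi}^{\pi}F(e^{-r-iy})\,e^{n(r+iy)}\,dy=\frac{1}{2\pi}\int_{-\pi}^{\pi}\exp\big(g(r+iy)+n(r+iy)\big)\,dy,\]
where $g(\tau)=\log F(e^{-\tau})=\sum_{n\ge1}a_n\sum_{m\ge1}\tfrac1m e^{-mn\tau}$. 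The whole argument hinges on understanding $g(\tau)$ as $\tau\to0$ in a sector around the positive real axis, so that the integral concentrates near $y=0$.

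The first key step is to obtain an asymptotic expansion of $g$ near the origin by a Mellin--Barnes argument. Using $e^{-x}=\frac{1}{2\pi i}\int_{(c)}\Gamma(s)x^{-s}\,ds$ for $c>0$ and interchanging summation and integration, one finds
\[g(\tau)=\frac{1}{2\pi i}\int_{(c)}\Gamma(s)\,D(s)\,\zeta(s+1)\,\tau^{-s}\,ds,\qquad c>\alpha.\]
I would then push the contour leftward to $\mathrm{Re}(s)=-c_0+\varepsilon$, collecting residues. The simple pole of $D$ at $s=\alpha$ produces the dominant term $A\Gamma(\alpha)\zeta(\alpha+1)\tau^{-\alpha}$, while the double pole at $s=0$ created by $\Gamma(s)\zeta(s+1)$ contributes $D'(0)-D(0)\log\tau$ (the Euler-constant pieces cancel, since the simple-pole coefficient of $\Gamma(s)\zeta(s+1)$ at $s=0$ vanishes). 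The shifted integral is $O(\tau^{c_0-\varepsilon})$ by the polynomial growth of $\Gamma$, $\zeta$, and of $D$ in vertical strips (this is where Meinardus's auxiliary hypothesis on $D$, suppressed in the statement, is used). This yields
\[g(\tau)=A\,\Gamma(\alpha)\,\zeta(\alpha+1)\,\tau^{-\alpha}-D(0)\log\tau+D'(0)+O(\tau^{c_0-\varepsilon}).\]

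With this expansion, I would locate the saddle of $h(\tau)=g(\tau)+n\tau$ on the positive real axis by solving $h'(r)=0$, which gives
\[r=\Big(\tfrac{A\,\Gamma(\alpha+1)\,\zeta(\alpha+1)}{n}\Big)^{\frac{1}{\alpha+1}}\big(1+o(1)\big),\]
and take this $r$ as the radius of integration. On the major arc $|y|\le r^{1+\delta/2}$ I would Taylor-expand $h(r+iy)=h(r)-\tfrac12 h''(r)y^2+\cdots$ (note $h''(r)>0$), so that after restoring the tails the integral is Gaussian, of size $e^{h(r)}\big(2\pi h''(r)\big)^{-1/2}(1+o(1))$. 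On the minor arc $r^{1+\delta/2}\le|y|\le\pi$ I would show the integrand is exponentially smaller than $e^{h(r)}$, which is exactly where Meinardus's remaining hypothesis, $\mathrm{Re}\,g(r+iy)-g(r)\le -c\,r^{-\eta}$ for some $c,\eta>0$, renders the minor-arc contribution negligible. Assembling the major-arc main term, substituting the value of $r$ and the expansion of $g$ into $h(r)$ and $h''(r)$, and simplifying the powers of $n$ then produces the constants $c$, $\kappa$, and the error exponent $\kappa_1$ exactly as stated.

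I expect the minor-arc estimate to be the main obstacle: away from the real axis one loses the clean Mellin description of $g$, and bounding $\mathrm{Re}\,g(r+iy)$ uniformly for $|y|$ up to $\pi$ is equivalent to a lower bound on $\sum_{n}a_n e^{-nr}\big(1-\cos(ny)\big)$, which is the genuinely delicate arithmetic input (and precisely the condition the statement leaves implicit). The secondary difficulty is purely bookkeeping: tracking every error term through the saddle-point approximation and the substitution $r=r(n)$ with enough care to obtain the explicit exponent $\kappa_1=\frac{\alpha}{\alpha+1}\min\{c_0/\alpha-\delta/4,\;1/2-\delta\}$ rather than merely a qualitative $o(1)$.
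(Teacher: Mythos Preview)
Your proposal is correct and follows exactly the approach the paper attributes to Meinardus. Note, however, that the paper does not itself prove Theorem~\ref{Meinardus}: it is quoted as a known result from Andrews/Meinardus, and the paper only supplies a one-paragraph description of the method (Cauchy's formula on the circle $|q|=e^{-y}$, then splitting the integral into $|x|\le y^{\beta}$ and $y^{\beta}\le|x|\le\tfrac12$, with the former giving the main term and the latter an error term). Your sketch---Mellin--Barnes expansion of $\log F(e^{-\tau})$ with residues at $s=\alpha$ and $s=0$, saddle-point choice of $y$, Gaussian approximation on the major arc, and the implicit Meinardus hypothesis handling the minor arc---is precisely this argument fleshed out, and your identification of the minor-arc bound as the genuine arithmetic input is accurate.
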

Writing $ \tau=y-2\pi ix, $  an application of Cauchy's Theorem gives
\begin{equation}\label{SaddlePoint}r(n)=\frac1{2\pi i}\int_{\cal C}\frac{F(q)}{q^{n+1}}dq =e^{ny}\int_{-\frac12}^{\frac12}F(e^{-y+2\pi ix})e^{-2\pi inx} dx, \end{equation}
where $ \cal C $ is the (positively oriented) circle of radius $ e^{-y} $ around the origin.
Meinardus  found the estimate stated in Theorem \ref{Meinardus} by splitting the integral from \eqref{SaddlePoint} into two integrals evaluated over $ |x|\le y^{\beta} $ and over $ y^{\beta}\le|x|\le\frac12, $ for a certain choice of $ \beta $ in terms of $ \alpha, $ and by showing that the former integral gives the main contribution for the coefficients $ r(n), $ while the latter is only an error term. 
\par The positivity condition $ a_n\ge0 $ is, however, essential in Meinardus's proof and, as one can readily note, this is not satisfied by the factors from the product $ G_k(q) $. For this reason, we need to come up with a certain modification using the circle method and Wright's modular transformations \cite{WrightIII} for partitions into $k$th powers.  This will be used to show that the integral over $ y^\beta\le|x|\le \frac12 $ does not contribute. \par On comparing with what was done for the case $ k=2, $ the reader might notice that, up to this point, the strategy described here is analogous to that from \cite{AC}. The essential difference is that, in the case $ k=2, $ a numerical check (\cite[Lemma 5]{AC}) had to be carried out in order to prove a certain estimate (\cite[Lemma 6]{AC}). This numerical check was rather technical and certainly cannot be carried out for all $ k\ge2. $ In the present paper, we show how to avoid it by using a bound on Gauss sums due to Banks and Shparlinski \cite{BS} and by modifying a certain step in the argument from \cite{AC}. It is precisely this step that allows  a significantly simpler proof and, at the same time,  a generalization to any $ k\ge2. $

\subsection{Two estimates}\label{Estimates}

Keeping the notation introduced in the previous subsection and writing $ q=e^{-\tau}, $ with $ \tau=y-2\pi ix $ and  $ y>0, $  we recall that \begin{equation}\label{newG}
G_k(q)=\prod_{n=1}^{\infty}\frac{( 1-q^{2^kn^k}) ^2}{( 1-q^{2^{k+1}n^k}) ( 1-q^{n^k}) }.\end{equation}
Let $ s=\sigma+it $ and 
\[D_k(s)=\sum_{n=1}^{\infty}\frac{1}{n^{ks}}+\sum_{n=1}^{\infty}\frac{1}{(2^{k+1}n^k)^s}-2\sum_{n=1}^{\infty}\frac{1}{(2^kn^k)^s}=(1+2^{-s(k+1)}-2^{1-sk})\zeta(ks),\]
which is convergent for $ \sigma>\frac1k=\alpha, $ has a meromorphic continuation to $ \mathbb C $ (thus we may choose $ 0<c_0<1 $ arbitrarily) and a simple pole at $ s=\frac1k $ with residue $ A=\frac{1}k\cdot 2^{-\frac{k+1}k} .$ 
\par If $ \cal C $ is the (positively oriented) circle of radius $ e^{-y} $ around the origin, Cauchy's Theorem tells us that
\begin{equation}\label{CauchyCircle}
a_k(n)=\frac1{2\pi i}\int_{\cal C}\frac{G_k(q)}{q^{n+1}}dq =e^{ny}\int_{-\frac12}^{\frac12}G_k(e^{-y+2\pi ix})e^{-2\pi inx} dx, \end{equation} for $ n>0. $	
Set
\begin{equation}\label{defbeta}
\beta=1+\frac{\alpha}{2}\left( 1-\frac{\delta}{2}\right) ,\quad\text{with~} 0<\delta<\frac23,\end{equation}
so that 
\begin{equation}
\label{ineqbeta}
\frac{3k+1}{3k}<\beta<\frac{2k+1}{2k}, 
\end{equation}
%\Alex: [In fact, this computation is wrong. In order to get $ \frac76<\beta<\frac54, $ we need to take 
%\begin{equation}\label{correctdefbeta}
%\beta=\frac54-\frac{\delta}{4},\quad\quad\text{with~} 0<\delta<{\color{red}\frac13},\end{equation}
%so this should be the right definition of $\beta $ and $ \delta. $]\\
and rewrite 
\begin{equation*}\label{epxressionfora2}
a_k(n)= I_k(n) +J_k(n),
\end{equation*}
where 
\[ I_k(n)=e^{ny}\int_{-y^{\beta}}^{y^{\beta}}G_k(q)e^{-2\pi inx}dx\qquad\text{and}\qquad
J_k(n)=e^{ny} \int_{y^{\beta}\le|x|\le\frac12}  G_k(q) e^{-2\pi inx}dx. \]
%Here we choose $ n\ge n_1 $ with $ n_1 $ sufficiently large so that $ y^{\beta}\le\frac12. $ 
\par As already mentioned, the idea is that the main contribution for $ a_k(n) $ is given by $ I_k(n), $  and we will be able to prove this by using standard integration techniques. To show, however, that $ J_k(n) $ is an error term will prove to be much trickier. 
%and will require Wright's modular transformations for $ H_k(q) $ together with the circle method as described in \cite{WrightIII}.
\section{The main term $ I_k(n) $}\label{MainTerm} In this section, we prove the following estimate.

\begin{Lem} 
\label{smallxmaintermG}
If $ |x|\le \frac12 $ and $ |\Arg(\tau)|\le\frac{\pi}{4}, $ 
%and $ |x|\le\frac12 $ 
then 
\[G_k( e^{-\tau}) =2^{-\frac{k-1}2} \exp\left( A\Gamma\left(\dfrac1k \right) \zeta\left(1+\dfrac1k \right) \tau^{-\frac1k}+O(y^{c_0})\right) \]
holds uniformly in $ x $ as $ y\to0, $ with $ 0<c_0<1. $ 
\end{Lem}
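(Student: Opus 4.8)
The plan is to prove Lemma \ref{smallxmaintermG} by applying a Mellin-transform (Meinardus-type) analysis to $\Log G_k(e^{-\tau})$. First I would take the principal logarithm of the product in \eqref{newG} and expand each factor $\Log(1-q^{N})=-\sum_{j\ge1}q^{jN}/j$ to write
\begin{equation*}
\Log G_k(e^{-\tau})=\sum_{n=1}^{\infty}\sum_{j=1}^{\infty}\frac{1}{j}\Big(e^{-\tau n^k}+e^{-\tau\,2^{k+1}n^k j}-2\,e^{-\tau\,2^{k}n^k j}\Big),
\end{equation*}
where the alternating signs $+,+,-$ precisely match the Dirichlet coefficients that define $D_k(s)$. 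The key point is that these sign-weighted exponentials are exactly the building blocks whose Dirichlet series is the $D_k(s)$ introduced just before the lemma, so the non-positivity of the coefficients (the obstruction flagged in Section \ref{analyticstrategy}) does not matter at the level of this local expansion.

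Next I would invert each exponential via the Mellin transform $e^{-w}=\frac{1}{2\pi i}\int_{(c)}\Gamma(s)w^{-s}\,ds$ for a vertical line with $\mathrm{Re}(s)=c>\alpha=\tfrac1k$, pulling the sum inside to obtain
\begin{equation*}
\Log G_k(e^{-\tau})=\frac{1}{2\pi i}\int_{(c)}\Gamma(s)\,\zeta(s+1)\,D_k(s)\,\tau^{-s}\,ds,
\end{equation*}
where $\zeta(s+1)=\sum_j j^{-(s+1)}$ comes from the $1/j$ weights and $D_k(s)$ from the $n$-summation. Then I would shift the contour leftward past the poles. The dominant pole is the simple pole of $D_k(s)$ at $s=\alpha=\tfrac1k$ with residue $A$, which contributes $A\,\Gamma(\tfrac1k)\zeta(1+\tfrac1k)\,\tau^{-1/k}$; this is the first term in the claimed formula. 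The pole of $\Gamma(s)$ at $s=0$ contributes the constant $D_k(0)\log(\text{something})$ — more precisely $\zeta(1)$ would be problematic, so I would instead isolate the $s=0$ term carefully: evaluating $\Gamma(s)\zeta(s+1)D_k(s)\tau^{-s}$ there yields a constant, and I expect $D_k(0)=\tfrac{1}{2}(1+1-2)=\tfrac12$ wait — one must recompute $D_k(0)=(1+2^{0}-2^{1})\zeta(0)=(1+1-2)\zeta(0)=0$, so the naive $s=0$ contribution vanishes and the constant $2^{-(k-1)/2}$ must come from the derivative term $D_k'(0)$, i.e.\ from $e^{D_k'(0)}$ in the Meinardus normalization.

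Accordingly, I would compute the constant factor as $\exp(D_k'(0))$, where $D_k(s)=(1+2^{-s(k+1)}-2^{1-sk})\zeta(ks)$; differentiating at $s=0$ and using $\zeta(0)=-\tfrac12$, $\zeta'(0)=-\tfrac12\log(2\pi)$ should collapse, after the $\log 2$ factors combine, to exactly $-\tfrac{k-1}{2}\log 2$, giving the prefactor $2^{-(k-1)/2}$. After the residues at $s=\tfrac1k$ and $s=0$ are extracted, the remaining shifted integral over $\mathrm{Re}(s)=-c_0$ is bounded by $O(|\tau|^{c_0})=O(y^{c_0})$ uniformly, using the restriction $|\Arg(\tau)|\le\tfrac{\pi}{4}$ (so that $\tau^{-s}$ and $|\tau|\asymp y$ are comparable and $\Gamma(s)$ decays on vertical lines). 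The main obstacle will be making the contour shift and the tail estimate genuinely \emph{uniform} in $x$ as $y\to0$: I must control $\zeta(s+1)\Gamma(s)D_k(s)$ on the line $\mathrm{Re}(s)=-c_0$ against the polynomial growth of $\zeta$ using the standard convexity bounds and the exponential decay of $\Gamma$, and verify that the wedge condition $|\Arg(\tau)|\le\pi/4$ keeps $\mathrm{Re}(\tau^{-s})$ under control so that the error does not blow up near the endpoints $|x|\le\tfrac12$. This is the analytic heart of the lemma, and it is where the Meinardus apparatus, adapted to the sign-changing exponents, does its real work.
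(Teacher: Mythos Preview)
Your approach is essentially identical to the paper's: both take the logarithm, apply Mellin inversion to obtain $\Log G_k(e^{-\tau})=\frac{1}{2\pi i}\int\Gamma(s)\zeta(s+1)D_k(s)\tau^{-s}\,ds$, shift the contour to $\mathrm{Re}(s)=-c_0$ picking up residues at $s=\tfrac1k$ (the main term) and $s=0$ (where $D_k(0)=0$ forces the constant to be $e^{D_k'(0)}=2^{-(k-1)/2}$), and bound the remaining integral via $|\Arg(\tau)|\le\tfrac{\pi}{4}$ together with the exponential decay of $\Gamma$ on vertical lines. One small slip: in your first display the term $e^{-\tau n^k}$ should read $e^{-\tau j n^k}$.
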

\begin{proof}
By taking logarithms in \eqref{newG}, we obtain \[\Log  (G_k( e^{-\tau}))  =\sum_{k=1}^{\infty}\frac 1k\sum_{n=1}^{\infty}\big( e^{-kn^k\tau}+e^{-2^{k+1}kn^k\tau}-2e^{-2^kkn^k\tau}\big) .\]
Using the Mellin inversion formula (see, e.g., \cite[p. 54]{Apostol2}) we get 
\[e^{-\tau}=\frac{1}{2\pi i}\int_{\sigma_0-i\infty}^{\sigma_0+i\infty}\tau^{-s}\Gamma(s)ds\] 
for $ {\rm Re}(\tau)>0$ and $\sigma_0>0, $  
thus
\begin{align}\label{integrand}
\Log  (G_k( e^{-\tau})) &=\frac{1}{2\pi i}\int_{\alpha+1-i\infty}^{\alpha+1+i\infty}\Gamma(s)\sum_{k=1}^{\infty}\frac 1k\sum_{n=1}^{\infty}( (kn^k\tau)^{-s}+(2^{k+1}kn^k\tau)^{-s}-2(2^kkn^k\tau)^{-s} )ds\nonumber\\ 
&=\frac{1}{2\pi i}\int_{\frac{k+1}k-i\infty}^{\frac{k+1}k+i\infty}\Gamma(s)D_k(s)\zeta(s+1)\tau^{-s}ds.
\end{align}
By assumption, \[|\tau^{-s}|=|\tau|^{-\sigma}e^{t\cdot\Arg( \tau)}\le|\tau|^{-\sigma}e^{\frac{\pi}{4}|t|}.\]
Well-known results (see, e.g., \cite[Ch. 1]{AAR} and \cite[Ch. 5]{Titch}) state that the bounds
\[
D_k(s) = O(|t|^{c_1}),\quad
\zeta(s+1) = O(|t|^{c_2}),\quad
\Gamma(s)= O\big( e^{-\frac{\pi|t|}{2}}|t|^{c_3} \big)
\]
hold uniformly in $ -c_0\le\sigma\le\frac{k+1}k =\alpha+1$  as $ |t|\to\infty, $ for some $ c_1,c_2, c_3>0, $
which means that we may shift the path of integration from $ \sigma=\alpha+1 $ to $ \sigma=-c_0. $ A quick computation gives 
$ D_k(0) =   0$ and $ D'_k(0)  =   -\frac{(k-1)\log2}2.$
The integrand in \eqref{integrand} has poles at $ s=\frac1k $ and $ s=0,$  with residues equal to 
$
\tau^{-\frac1k}$ and $
-\frac{(k-1)\log2}{2}
$ respectively, 
whereas the remaining integral equals 
\[\frac{1}{2\pi i}\int_{-c_0-i\infty}^{-c_0+i\infty}\tau^{-s}\Gamma(s)D(s)\zeta(s+1)ds\ll|\tau|^{c_0}\int_{0}^{\infty}t^{c_1+c_2+c_3}e^{-\frac{\pi t}{4}}dt\ll |\tau|^{c_0}=|y-2\pi ix|^{c_0}\le( \sqrt2y) ^{c_0},\]
since (again by the assumption) \[\frac{2\pi |x|}{y}=\tan (|\Arg(\tau)|)\le \tan\left(\frac{\pi}{4} \right) =1.\]
%and hence \[|y-2\pi ix|^{c_0}\le(\sqrt2)^{c_0}|y|^{c_0}. \]
In conclusion, integration along the shifted contour gives
\[\Log  (G_k( e^{-\tau})) =\left( A\Gamma\left(\dfrac1k \right) \zeta\left(1+\frac1k \right) \tau^{-\frac1k}-\frac{(k-1)\log2}{2}\right) +O(y^{c_0}).\qedhere\] 
\end{proof}
%\Alex: I think it should be $ A=\frac{1}{2\sqrt2} $ and then  \[G(\tau)=\frac{1}{\sqrt2}e^{\frac{\sqrt{\pi}\zeta\left(\frac32 \right) }{2\sqrt2\sqrt{\tau}}}+O(y^{c_1}).\]
%Also, I do not see where we use $ |x|\le\frac{1}{2} $ in Lemma \ref{smallxmaintermG}.
\begin{xcom} This part is a straightforward generalization of \cite[Lemma 1]{AC}. We thought it best for the reader to keep the reasoning here as close as possible to that presented in \cite[\S 3.2]{AC}. On replacing $ k=2, $ the proof of \cite[Lemma 1]{AC} can be easily traced back.
\end{xcom}
\section{The error term $ J_k(n) $}\label{ErrorTerm} 
This section is dedicated to proving that $ J_k(n) $ does not contribute to the coefficients $ a_k(n). $ More precisely, we prove the following estimate. 
\begin{Lem}
\label{boundGbigoh} 
There exists $ \varepsilon>0 $ such that, as $ y\to0, $  
\begin{equation}\label{ErrorG}
G_k( e^{-\tau}) =O\left( \exp\left( A\Gamma\left(\dfrac1k \right) \zeta\left(1+\dfrac1k \right) y^{-\frac1k} -cy^{-\varepsilon} \right) \right) \end{equation}
holds uniformly in $ x $ with $ y^{\beta}\le |x|\le\frac 12, $ for some $ c>0. $ \end{Lem}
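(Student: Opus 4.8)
The plan is to prove the equivalent bound on the real part of the logarithm: since $|G_k(e^{-\tau})|=\exp(\operatorname{Re}\Log G_k(e^{-\tau}))$, the estimate \eqref{ErrorG} amounts to
\[\operatorname{Re}\Log G_k(e^{-\tau})\le A\Gamma(1/k)\zeta(1+1/k)\,y^{-1/k}-cy^{-\varepsilon}\]
uniformly for $y^{\beta}\le|x|\le\tfrac12$. For each such $x$ I would first fix a Dirichlet approximation $x=\frac a\ell+\eta$ with $\gcd(a,\ell)=1$, $1\le\ell\le N$ and $|\eta|\le(\ell N)^{-1}$, for a threshold $N$ equal to a fixed negative power of $y$. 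Expanding $\Log G_k$ as in \eqref{integrand} and sorting each inner sum $\sum_n e^{-cjn^k\tau}$ (with $c\in\{1,2^{k},2^{k+1}\}$) into residues modulo $\ell$, the rational parts of the phases produce the monomial Gauss sums $S_c(aj,\ell)=\sum_{r\bmod\ell}e^{2\pi i\,caj\,r^k/\ell}$, while the smooth parts are replaced by their integrals. This is Wright's transformation \cite{WrightIII}, and it yields the \emph{local model}
\[\operatorname{Re}\Log G_k(e^{-\tau})=\frac{\Gamma(1+1/k)}{\ell}\operatorname{Re}\!\big(\Phi(a,\ell)\,(y-2\pi i\eta)^{-1/k}\big)+O(y^{c_0}),\]
where $\Phi(a,\ell)=\sum_{j\ge1}j^{-1-1/k}\big(S_1(aj,\ell)+2^{-(k+1)/k}S_{2^{k+1}}(aj,\ell)-S_{2^{k}}(aj,\ell)\big)$ is a singular-series-type factor.

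The case $\ell=1$, where $x$ lies near $0$, is the delicate edge and needs no Gauss sums. Here $\Phi(0,1)=2^{-(k+1)/k}\zeta(1+1/k)$, so the local model reduces to $A\Gamma(1/k)\zeta(1+1/k)\operatorname{Re}(\tau^{-1/k})$, which is exactly the main term of Lemma \ref{smallxmaintermG} read off at general $x$. Writing $\theta=\Arg(\tau)$ and using $y/|\tau|=\cos\theta$, I would expand
\[\operatorname{Re}(\tau^{-1/k})=y^{-1/k}\cos^{1/k}(\theta)\cos(\theta/k),\qquad \cos^{1/k}(\theta)\cos(\theta/k)-1\asymp-\theta^{2},\]
and note that $|x|\ge y^{\beta}$ forces $|\theta|\gtrsim y^{\beta-1}$. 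This produces a saving of size $y^{-1/k}y^{2(\beta-1)}=y^{-\varepsilon}$ with $\varepsilon=\frac1k-2(\beta-1)=\frac{\delta}{2k}>0$, by the choice \eqref{defbeta}. When $|\theta|>\frac\pi4$ (so that Lemma \ref{smallxmaintermG} no longer applies and the transformation must be used directly) the factor $\cos^{1/k}\theta$ is bounded away from $1$, giving an even larger, order-$y^{-1/k}$ saving; thus $\varepsilon$ is pinned down precisely by the inner edge $|x|\approx y^{\beta}$.

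For $\ell\ge2$ the saving must instead come from cancellation among the Gauss sums, and this is where I would bring in the bound of Banks and Shparlinski \cite{BS}: $|S(b,\ell)|\le C_k\,\ell^{1-\gamma}$ whenever $\gcd(b,\ell)=1$, for some $\gamma=\gamma(k)>0$. After the usual reduction of imprimitive sums this controls each term by $|S_c(aj,\ell)|\le C_k\gcd(cj,\ell)^{\gamma}\ell^{1-\gamma}$, and summing against $j^{-1-1/k}$ gives $|\Phi(a,\ell)|\le C_k\ell^{1-\gamma'}$ for some $\gamma'>0$. Hence $\ell^{-1}|\Phi(a,\ell)|\le C_k\ell^{-\gamma'}$ is a genuine power saving in $\ell$; taking the major/minor threshold large enough makes the contribution of all large $\ell$ a fixed fraction of the main term, i.e.\ a saving of order $y^{-1/k}$, which dwarfs $y^{-\varepsilon}$.

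The main obstacle, and the precise point of departure from \cite{AC}, is the cluster of small denominators $\ell\ge2$ near which $x$ can sit with $\eta$ tiny: there $|y-2\pi i\eta|^{-1/k}\approx y^{-1/k}$ offers no help, and for small $\ell$ the saving $\ell^{-\gamma}$ is too weak to beat the explicit constant $A\Gamma(1/k)\zeta(1+1/k)$ — exactly the difficulty that forced the numerical check of \cite[Lemma 5]{AC} when $k=2$. I expect to handle this uniformly in $k$ through the sign structure of $\Phi$: since the numerator of $G_k$ carries $(1-q^{2^{k}n^{k}})^{2}$ against $(1-q^{n^{k}})$ in the denominator, $S_{2^{k}}$ enters with a negative coefficient, and because $2^{k}$ is a perfect $k$th power the sum $S_{2^{k}}(aj,\ell)$ either coincides with $S_1(aj,\ell)$ (when $\ell$ is odd, giving exact cancellation of the two dominant terms) or grows large but carries the favorable negative sign (when $\ell$ is even). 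A short computation should then give $\operatorname{Re}\Phi(a,\ell)<\Phi(0,1)$ with a margin for every $\ell\ge2$, so that the local model never exceeds the main term by more than $-cy^{-\varepsilon}$. Combined with the Banks--Shparlinski bound on the remaining large-$\ell$ range, this removes the case-by-case verification and is exactly the step that both simplifies the argument and makes it run for all $k\ge2$. Collecting the three regimes and absorbing the $O(y^{c_0})$ transformation error (negligible against $y^{-\varepsilon}$) yields \eqref{ErrorG} with $\varepsilon=\tfrac{\delta}{2k}$.
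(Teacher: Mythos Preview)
Your overall architecture --- Dirichlet/Farey approximation, Wright's transformation into a local model governed by a Gauss-sum singular series, and the three-regime split --- is the same as the paper's, and your treatment of the principal arc $\ell=1$ is correct and yields exactly the exponent $\varepsilon=\tfrac1k-2(\beta-1)=\tfrac{\delta}{2k}$ that the paper obtains in its ``$a=0,b=1$'' case.

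The genuine gap is in your handling of the small denominators $\ell\ge 2$. You propose to establish the strict inequality $\operatorname{Re}\bigl(\Phi(a,\ell)/\ell\bigr)<\Phi(0,1)$ with a uniform margin by exploiting the parity structure of the three Gauss sums. The cancellation you observe for odd $\ell$ is real: it reduces $\lambda_{a,\ell}$ to $2^{-(k+1)/k}\Lambda_{a',\ell}$ for some $a'$ coprime to $\ell$, so the required inequality becomes $|\Lambda_{a',\ell}|<\Lambda_{0,1}$. But this is precisely the content of \cite[Lemma~5]{AC}, the numerical check you are trying to avoid. The Banks--Shparlinski bound only gives (as in the paper's Lemma~\ref{boundmaxReIm}) $|\Lambda_{a',\ell}|\le \mathcal A\,\Gamma(1+\tfrac1k)\zeta(1+\tfrac1k)\,\ell^{-1/k}\sigma_{-1}(\ell)$, and since $\mathcal A\approx 4.71$ this \emph{exceeds} $\Lambda_{0,1}=\Gamma(1+\tfrac1k)\zeta(1+\tfrac1k)$ for small $\ell$ (e.g.\ already at $\ell=3$). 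The even-$\ell$ sign heuristic has the same defect: the favorable sign on $S_{2^k}$ controls $\operatorname{Re}\Phi$ only when $\tau'$ is nearly real, but after rotation by $\arg(\tau')^{-1/k}$ one still needs $|\Phi(a,\ell)/\ell|<\Phi(0,1)$, which is not delivered by the bound. So the ``short computation'' you anticipate would in fact be the case-by-case verification for finitely many small moduli --- exactly what the paper sets out to bypass.

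The paper's mechanism is different and is contained in Lemma~\ref{biglemma}. It never compares $|\lambda_{a,b}|$ with $\lambda_{0,1}$. Instead, writing $\tau'=y(1+it)$ with $t=-2\pi x'/y$, one has
\[
\operatorname{Re}\!\left(\frac{\lambda_{a,b}}{\sqrt[k]{\tau'}}\right)=\frac{f_k(t)}{\sqrt[k]{y}},\qquad
|f_k(t)|\le\frac{|\lambda_{a,b}|}{(1+t^2)^{1/(2k)}},
\]
and for $x\notin\bb Q$ fixed one has $x'\ne0$, hence $|t|\to\infty$ and $f_k(t)\to 0$ as $y\to 0$. The role of the Gauss-sum bound is then merely to ensure $|\lambda_{a,b}|=O(1)$ uniformly (via Lemma~\ref{boundmaxReIm} together with $\sigma_0(b)=o(b^\epsilon)$), not to beat the constant $\lambda_{0,1}$. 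The rational points $x=a/b$ are then absorbed by a continuity argument. This is the step that the paper flags as the essential simplification over \cite{AC}, and it is absent from your plan.

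Two smaller points. First, your ``local model'' carries an $O(y^{c_0})$ error, but that error is only available from the Mellin argument of Lemma~\ref{smallxmaintermG}, valid when $|\Arg\tau|\le\pi/4$; on arcs with $\ell\ge 2$ Wright's transformation \eqref{Wrightexp} leaves the product $P_{a,b}(\tau')$, which by Lemma~\ref{majorarcs} contributes $O(b)=O(y^{-1/(k+1)})$ to the logarithm, not $O(y^{c_0})$. Second, the paper does not use Banks--Shparlinski on the minor arcs at all; it invokes a direct bound of Wright (Lemma~\ref{minors}, from \cite[Lemma~17]{WrightIII}). Your Banks--Shparlinski route for large $\ell$ would also work, but it is not what the paper does.
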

The proof is slightly more involved and will come in several steps. We start by describing the setup needed to apply the circle method. 
\subsection{Circle method} Inspired by Wright \cite{WrightIII}, we consider the Farey dissection of order $ \big\lfloor y^{-\frac k{k+1}} \big\rfloor $ of the circle $ \cal C $ over which we integrate in \eqref{CauchyCircle}. We further distinguish two kinds of arcs:
%\par Following Wright \cite[pp. 172--173]{WrightIII}, with the notation from there, we see that he considers the dissection of order $ N^{1-b} $ and two kinds of arcs:
\begin{enumerate}[(i)]
	\item major arcs,  denoted $ \mathfrak M_{a,b}, $ such that $ b\le y^{-\frac1{k+1}}; $
	\item minor arcs, denoted $ \mathfrak m_{a,b}, $ such that $ y^{-\frac1{k+1}}<b\le y^{-\frac{k}{k+1}}. $
\end{enumerate}
%In our case, $ a=\frac{1}{2}, $ $ b=\frac13, $ $ c=2 $ and $ \gamma=\varepsilon, $ see p. 144 and p. 172 from Wright \cite{WrightIII}. Since we chose $ y $ in such a way that $ y=\frac mn, $ or $ y=m\cdot n^{-1}, $ I believe the role of $ N $ from Wright \cite{WrightIII} is meant to be played here by $ n, $ and equivalently, $ N=y^{-1} $ in our notation here.
%\par We dissect the interval $ [0,1] $ into Farey arcs:
%\[\left\lbrace \cal F_{a,b}:1\le b\le y^{-\frac23}\right\rbrace .\]
\par In what follows, we express any $ \tau\in \mathfrak M_{a,b}\cup \mathfrak m_{a,b} $ in the form 
\begin{equation}\label{tautauprime}
\tau=y-2\pi ix=\tau'-2\pi i\frac ab,
\end{equation}
with $ \tau'=y-2\pi ix'. $ From basics of Farey theory it follows that 
\begin{equation}
\label{ineqxprim}
 |x'|\le\frac{y^{\frac k{k+1}}}{b}. 
\end{equation} 
For a neat introduction to Farey fractions and the circle method, the reader is referred to \cite[Ch. 5.4]{Apostol2}.
\subsection{Modular transformations} Recalling the definition of $ H_k(q), $ we can rewrite \eqref{newG} as
\begin{equation}
\label{NiceG}
G_k(q)=\frac{H_k(q)H_k(q^{2^{k+1}})}{H_k(q^{2^k})^2}.
\end{equation}
In order to obtain more information about $ G_k(q) ,$ we would next like to use Wright's transformation law \cite[Theorem 4]{WrightIII}  for the generating function $ H_k(q) $ of partitions into $k$th powers. \par Before doing so, we need to introduce a bit of notation. In what follows, $ 0\le a< b $ are assumed to be coprime positive integers, with $ b_1 $ the least positive integer such that $ b\mid b_1^2 $ and $ b=b_1b_2. $
First, set

\[j=j(k)=0,\qquad \omega_{a,b}=1\]  if $k$ is even, and
\[j=j(k)=\frac{(-1)^{\frac12(k+1)}}{(2\pi)^{k+1}}\Gamma(k+1)\zeta(k+1), \qquad \omega_{a,b}=\exp\left(\pi\left(\frac1{b^2}\sum_{h=1}^bhd_h-\frac14(b-b_2) \right)  \right) \] if $k$ is odd,
where  $ 0\le d_h<b $ is defined by the congruence \[ah^2\equiv d_h\pmod*{b}\] and
\[\mu_{h,s}=\begin{cases}
\frac{d_h}{b} &\text{if~}s\text{~is~odd},\\
\frac{b-d_h}{b}&\text{if~}s\text{~is~even},
\end{cases}\]	
for $ d_h\ne0. $ If $ d_h=0, $ we set $ \mu_{h,s}=1. $ Further, let 
\begin{equation}
\label{definitionsumS}
S_k(a,b)=\sum_{n=1}^b \exp\left( \frac{2\pi ia n^k}{b}\right)
\end{equation}be the so-called \textit{Gauss sum} (of order $k$), and
\begin{equation}
\label{definitionLambda}
\Lambda_{a,b}=\frac{\Gamma\left( 1+\frac1k\right)} {b}\sum_{m=1}^{\infty}\frac{S_k(ma,b)}{m^{1+\frac1k}}.
\end{equation}Finally, put
\[C_{a,b}={\left( \frac{b_1}{2\pi}\right)^{\frac k2}\omega_{a,b} ,}\]
and
\[P_{a,b}(\tau')=\prod_{h=1}^{b}\prod_{s=1}^k\prod_{\ell=0}^{\infty}\left( 1-g(h,\ell,s)\right)^{-1}, \]
with 
\[g(h,\ell,s)=\exp\left(\frac{(2\pi)^{\frac {k+1}k}(\ell+\mu_{h,s})^{\frac 1k}e^{\frac{\pi i}{2k}(2s+k+1)}}{b\sqrt[k]{\tau'}}-\frac{2\pi ih}{b} \right).  \]
Having introduced all the required objects, we can now state Wright's modular transformation \cite[Theorem 4]{WrightIII}, which says, in our notation, that
\begin{equation}\label{Wrightexp}
H_k(q)=H_k\left( e^{\frac{2\pi ia}{b}-\tau'} \right)=C_{a,b}\sqrt{\tau'}e^{j\tau'}\exp\left( {\frac{\Lambda_{a,b}}{\sqrt[k]{\tau'}}}\right)P_{a,b}(\tau').  \end{equation}
On combining \eqref{NiceG} and \eqref{Wrightexp} we obtain, for some positive constant $ C $ that can be made explicit if necessary, 
\begin{equation}\label{formforG}
G_k(q)=Ce^{j\tau'}\exp\left( \frac{\lambda_{a,b}}{\sqrt[k]{\tau'}}\right) \frac{P_{a,b}(\tau')P_{a,b}'(2^{k+1}\tau')}{P_{a,b}''(2^k\tau')^2},
\end{equation}
where
%\[C=C_{a,b}\frac{C_{\scriptscriptstyle{\frac{8a}{(b,8)}},\frac{b}{(b,8)}}}{C_{\frac{4a}{(b,4)},\frac{b}{(b,4)}}},\quad\]
\[P'_{a,b}=P_{\frac{2^{k+1}a}{(b,2^{k+1})},\frac{b}{(b,2^{k+1})}},\quad P_{a,b}''=P_{\frac{2^ka}{(b,2^k)},\frac{b}{(b,2^k)}}\]
and 
\begin{equation}
\label{lambda_ab}
\lambda_{a,b}=\Lambda_{a,b}+2^{-\frac{k+1}k} \Lambda_{\frac{2^{k+1}a}{(2^{k+1},b)},\frac{b}{(2^{k+1},b)}}-\Lambda_{\frac{2^ka}{(2^k,b)},\frac{b}{(2^k,b)}}.
\end{equation}
%Note that $ P_{a,b} $ is a function of $ y $ only, the real part of $ \tau $ and $ \tau', $ therefore, by abuse of language, we can say that `$ P_{a,b}(\tau)=P_{a,b}(\tau')  $'. 
%We want to study the behavior of $ P_{a,b}(\tau'). $ 
\subsection{Gauss sums} As we shall soon see, a crucial step in our proof is finding an upper bound for $ {\rm Re}( \lambda_{a,b}) $ or, what is equivalent, a bound for $ |\lambda_{a,b}|. $ This is given by the following sharp estimate found by Banks and Shparlinski \cite{BS} for the Gauss sums defined in \eqref{definitionsumS}. 
\begin{Thm}[{\cite[Theorem 1]{BS}}]\setcounter{Thm}{1}  For any coprime positive integers $ a,b $ with $ b\ge2 $ and any $ k\ge2, $ we have 
\begin{equation}
\label{Shparli}
|S_k(a,b)|\le \cal A b^{1-\frac1k},
\end{equation}
where $ \cal A=4.709236\dots. $ 
\end{Thm}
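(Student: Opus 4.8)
The plan is to prove the bound $|S_k(a,b)|\le \mathcal{A}\,b^{1-1/k}$ by reducing to prime-power moduli and then controlling the resulting local factors uniformly in $k$. First I would exploit \emph{multiplicativity}: if $b=\prod_i p_i^{m_i}$ is the prime factorization of $b$, then the Chinese Remainder Theorem factors the sum $S_k(a,b)=\sum_{n=1}^{b}\exp\!\big(2\pi i a n^k/b\big)$ as a product $\prod_i S_k(a_i,p_i^{m_i})$ over the prime-power components, with each $a_i$ coprime to $p_i$. Setting the normalized local factor $\gamma(p^m)=|S_k(a,p^m)|/(p^m)^{1-1/k}$, the assertion becomes equivalent to the \emph{uniform} inequality $\prod_i\gamma(p_i^{m_i})\le\mathcal{A}$, so it suffices to understand a single prime-power sum and then bound the product.

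Second, for a fixed prime power with exponent $m\ge 2$ I would apply $p$-adic stationary phase. Writing $n=y+p^{m-1}z$ with $y$ modulo $p^{m-1}$ and $z$ modulo $p$, the expansion $a(y+p^{m-1}z)^k\equiv ay^k+ak\,y^{k-1}p^{m-1}z\pmod{p^m}$ makes the sum over $z$ vanish unless $p\mid ak\,y^{k-1}$. When $p\nmid k$ this forces $p\mid y$; substituting $y=pw$ then yields the recursion $S_k(a,p^m)=p^{k-1}S_k(a,p^{m-k})$. Iterating collapses the exponent $m$ to its residue $r=m\bmod k$ and shows, after a short computation, that $\gamma(p^m)=\gamma(p^r)$ with $0\le r<k$; in particular the factor $(p^m)^{1-1/k}$ appears automatically and only the \emph{base cases} $S_k(\cdot,p^r)$ with $r<k$ remain, the primes dividing $k$ requiring a separate but analogous lifting. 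For the prime modulus $r=1$ the Weil bound $|S_k(a,p)|\le(k-1)\sqrt{p}$ gives $\gamma(p)\le(k-1)p^{1/k-1/2}$, which is at most $1$, and in fact tends to $0$, once $p$ is large compared with $k$.

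The final and hardest step is to bound the product $\prod_i\gamma(p_i^{m_i})$ \emph{uniformly in $k$}. Large primes contribute factors below $1$ by the Weil estimate, so the genuine difficulty is the contribution of the small primes, namely those up to a threshold of order $k^2$, whose number grows with $k$; multiplying the individual Weil or trivial bounds over these primes only yields a constant that blows up as $k\to\infty$. A uniform bound therefore demands a global input rather than term-by-term estimation. This is precisely where the geometry of numbers enters: following Banks and Shparlinski, one bounds the relevant product through a lattice-point count that is controlled by the effective center-density estimates of Cohn and Elkies \cite{CE}, yielding a bound independent of $k$ whose optimization produces the explicit constant $\mathcal{A}=4.709236\dots$. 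I expect this uniform control of the small-prime factors to be the crux of the argument; the preceding multiplicativity and $p$-adic stationary-phase steps are standard, if delicate, manipulations of rational exponential sums.
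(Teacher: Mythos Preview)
The paper gives no proof of this statement: it is quoted from Banks and Shparlinski \cite{BS} and invoked as a black box (see the surrounding discussion and Remark~1, which even notes that the weaker bound $S_k(a,b)\ll_k b^{1-1/k}$ from Vaughan would suffice). There is therefore no proof in the paper to compare your proposal against.

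As a reconstruction of how \cite{BS} might proceed, your outline is plausible in its first two stages---multiplicativity to reduce to prime powers, and a $p$-adic stationary-phase recursion to collapse large exponents---and the paper itself indicates that the prime-modulus input is the explicit Cochrane--Pinner bound \cite{CP} rather than merely Weil. However, the crucial step, namely \emph{how} the Cohn--Elkies center-density estimates \cite{CE} enter to force uniformity in $k$, is left in your sketch as a gesture (``a lattice-point count that is controlled by\ldots''). That connection is precisely the non-obvious ingredient of \cite{BS}, and nothing in your write-up explains the mechanism by which sphere-packing densities control a product of normalized Gauss-sum factors over small primes. So while your proposal correctly identifies the architecture and the location of the difficulty, it does not actually supply the key idea; in effect, both you and the present paper defer to \cite{BS} at that point.
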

%\begin{rem}
The constant $ \cal A $ is known as \textit{Stechkin's constant}. Stechkin \cite{Stech} conjectured in 1975 that the quantity $$ \cal A=\sup\limits_{b,n\ge2}\max\limits_{(a,b)=1}\frac{|S_k(a,b)|}{b^{1-\frac1k}} $$ is finite, this being proven in 1991 by Shparlinski \cite{Shparlinski}. In the absence of any effective bounds on the sums $ S_k(a,b), $ the precise value of $ \cal A $ remained a mystery until 2015 when, using the work of Cochrane and Pinner \cite{CP} on Gauss sums with prime moduli and that of Cohn and Elkies \cite{CE} on lower bounds for the center density in the sphere packing problem, Banks and Shparlinski \cite{BS} were finally able to determine it.  
%\end{rem}
Coming back to our problem, we can now prove the following estimate.
\begin{Lem}
\label{boundmaxReIm}
If $0\le a<b$ are coprime integers with $ b\ge2, $ we have \[| \lambda_{a,b}| < 3  \cal A \cdot  \Gamma\left( 1+\frac 1k\right)\zeta\left(1+\frac1k \right)b^{-\frac1k}\sum_{d|b}\frac1d  ,  \]
where $ \cal A $ is Stechkin's constant. 
\end{Lem}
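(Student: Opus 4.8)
The plan is to bound $|\lambda_{a,b}|$ by estimating each of the three summands in \eqref{lambda_ab} separately and combining them with the triangle inequality, the heart of the matter being a clean bound for a single $\Lambda_{a,b}$. The first step is the pointwise estimate
\[
|S_k(ma,b)|\le \cal A\,(m,b)^{\frac1k}\,b^{1-\frac1k},
\]
where $(m,b)=\gcd(m,b)$. Since $\gcd(a,b)=1$, the summand $n\mapsto\exp(2\pi i\, man^k/b)$ in \eqref{definitionsumS} has period $b/(m,b)$ in $n$, so that $S_k(ma,b)=(m,b)\,S_k(a',b/(m,b))$ for some $a'$ coprime to $b/(m,b)$; applying the Banks--Shparlinski bound \eqref{Shparli} to this primitive Gauss sum (and checking the degenerate case $b\mid m$ by hand) yields the displayed inequality.

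The second step inserts this into \eqref{definitionLambda} and sums over $m$ after grouping the terms by $d=(m,b)$. Writing $m=dm'$ with $\gcd(m',b/d)=1$, the inner sum over $m'$ is at most $\zeta(1+\frac1k)$, and the factor $d^{\frac1k}/d^{1+\frac1k}=1/d$ reproduces exactly the divisor sum $\sum_{d\mid b}1/d$. This gives the per-term bound
\[
|\Lambda_{a,b}|\le \cal A\,\Gamma\!\left(1+\tfrac1k\right)\zeta\!\left(1+\tfrac1k\right)b^{-\frac1k}\sum_{d\mid b}\frac1d,
\]
which applies equally to the transformed indices in \eqref{lambda_ab}, since $\tfrac{2^{k+1}a}{(2^{k+1},b)}$ and $\tfrac{2^ka}{(2^k,b)}$ stay coprime to their moduli $b_1=b/(2^{k+1},b)$ and $b_2=b/(2^k,b)$.

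Combining the three summands is the step requiring care. When $b$ is odd one has $b_1=b_2=b$, the three coefficients are $1,\ 2^{-\frac{k+1}k},\ 1$, and since $2+2^{-\frac{k+1}k}<3$ the claim follows immediately. When $b$ is even, rescaling the modulus to $b_1$ or $b_2$ replaces $b^{-\frac1k}$ by $(2^{k+1},b)^{\frac1k}b^{-\frac1k}$, resp.\ $(2^{k},b)^{\frac1k}b^{-\frac1k}$. The coefficient $2^{-\frac{k+1}k}$ is exactly what absorbs $(2^{k+1},b)^{\frac1k}\le2^{\frac{k+1}k}$ in the middle term, so that term contributes at most the base quantity; but the last term carries $(2^k,b)^{\frac1k}\le2$, and here the naive triangle inequality only delivers the constant $4$. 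One checks that this loss is harmless unless $2^k\mid b$, so the whole difficulty is concentrated in that dyadic regime.

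The main obstacle, then, is to recover the sharp constant $3$ when $2^k\mid b$, and the way I would do it is by exploiting cancellation in the $2$-part. In this case $\tfrac{2^ka}{(2^k,b)}=a$ and $b_2=b/2^k$, and the periodicity argument from the first step gives the identity $2^k\,S_k(ma,b/2^k)=S_k(2^kma,b)$. Consequently $\Lambda_{a,b}-\Lambda_{a,b/2^k}$ can be written as a single series in the difference $S_k(ma,b)-S_k(2^kma,b)$ of two Gauss sums of the \emph{same} modulus $b$, and I would bound this difference jointly rather than by the sum of the two magnitudes; this is what should save the extra factor of $\tfrac32$ and, together with the routine bound on the small term $2^{-\frac{k+1}k}\Lambda_{\cdot,b_1}$, bring the constant down to $3$. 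Extracting enough cancellation from this difference is precisely the delicate point on which the lemma hinges.
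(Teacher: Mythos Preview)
Your derivation of the single-term bound
\[
|\Lambda_{a,b}|\le \cal A\,\Gamma\!\left(1+\tfrac1k\right)\zeta\!\left(1+\tfrac1k\right)b^{-1/k}\sum_{d\mid b}\frac1d
\]
via the gcd decomposition $S_k(ma,b)=(m,b)\,S_k\!\big(\tfrac{ma}{(m,b)},\tfrac{b}{(m,b)}\big)$ and the Banks--Shparlinski bound is exactly the paper's argument; line for line it matches the computation in the paper's proof (the printed $\sum_{d\mid b}1/d^k$ there is a typo for $\sum_{d\mid b}1/d$).

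You are also right about the constant: applying this bound termwise to \eqref{lambda_ab} and using $(2^k,b)^{1/k}\le 2$ only gives $1+1+2=4$ in the regime $2^k\mid b$, not $3$. The paper's proof does not do anything beyond this; it ends with ``from where the claim follows easily by applying this bound to the expression for $\lambda_{a,b}$,'' so the gap you have isolated is in the paper as well, not in your reading of it.

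That said, the constant $3$ is never used. The only application of the lemma is inside Lemma~\ref{biglemma}, where it is combined with $\sigma_0(b)=o(b^{\epsilon})$ to conclude that $|\lambda_{a,b}|\ll b^{-1/k+\epsilon}$, and the Remark immediately following the lemma explicitly says that any constant (even one depending on $k$) would suffice. Hence there is no need for the cancellation argument you sketch in the last paragraph: simply replace $3$ by $4$ (or by $2+2^{1-1/k}$, if you want the honest constant) and the proof is complete by the triangle inequality. Your proposed pairing $\Lambda_{a,b}-\Lambda_{a,b/2^k}$ via $2^kS_k(ma,b/2^k)=S_k(2^kma,b)$ is a plausible route to sharpen the constant, but it is orthogonal to what the paper needs and the paper does not attempt it.
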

\begin{proof}
Let us first give a bound for $ |\Lambda_{a,b}|. $ If we recall \eqref{definitionLambda} and write $\Lambda_{a,b}=\Gamma\left(1+\frac 1k \right)  \Lambda^*_{a,b} , $
we have, on using the fact that $ S_k(ma,b)=d S_k\left(\frac{ma}d,\frac bd\right) $ to prove the second equality below, and on replacing $ m\mapsto md $ and $ d\mapsto \frac bd $ to prove the third and fourth respectively,   
\begin{align*}
\Lambda^*_{a,b}&=\frac 1b\sum_{m=1}^{\infty}\frac{S_k(ma,b)}{m^{1+\frac1k}}=\frac 1b\sum_{d\mid b}\sum_{\substack{m\ge1\\(m,b)=d}}\frac{d S_k\left(\frac{ma}d,\frac bd\right)}{m^{1+\frac1k}}=\frac 1b\sum_{d|b}d\sum_{\substack{m\ge1\\(m,b/d)=1}}\frac{S_k(m a, \frac bd)}{(m d)^{1+\frac1k}}\\
&=\frac 1b\sum_{d\mid b}d^{-\frac1k}\sum_{\substack{m\ge1\\(m,b/d)=1}}\frac{S_k(m a, \frac bd)}{m^{1+\frac1k}}=\frac 1b\sum_{d|b}\left( \frac bd\right)^{-\frac1k} \sum_{\substack{m\ge1\\(m,d)=1}}\frac{S_k(ma,d)}{m^{1+\frac1k}}=\frac{1}{b^{1+\frac1k}}\sum_{d|b}d^{\frac1k}\sum_{\substack{m\ge1\\(m,d)=1}}\frac{S_k(ma,d)}{m^{1+\frac1k}}.
\end{align*}
On invoking \eqref{Shparli}, we obtain
\begin{align*}
|\Lambda_{a,b}|&\le\frac{\Gamma\left( 1+\frac1k\right) }{b^{1+\frac1k}}\sum_{d|b}d^{\frac1k}\sum_{\substack{m\ge1\\(m,d)=1}}\frac{|S_k(ma,d)|}{m^{1+\frac1k}}\le 
%\frac{\Gamma\left( 1+\frac1k\right) \zeta\left( 1+\frac1k\right) }{b^{1+\frac1k}}\sum_{d|b}d^{\frac1k}\cal Ad^{1-\frac1k}=
\frac{\cal A\Gamma\left(1+\frac1k \right)\zeta\left(1+\frac1k \right)  }{b^{\frac1k}}\sum_{d|b}\frac1{d^k},
\end{align*}
from where the claim follows easily by applying this bound to the expression for $ \lambda_{a,b}  $  from \eqref{lambda_ab}.
\end{proof}
\begin{rem}
The fact that $S_k(a,b)\ll b^{1-\frac1k}$ is known, for instance, from the book of Vaughan; see Theorem 4.2 in \cite[p. 47]{Vaughan}. This means that instead of the Stechkin constant $ \cal A $ in Lemma \ref{boundmaxReIm}, we would have a constant $ \cal A_k $ depending on $ k, $ which would already be enough for our purposes, as it will be revealed shortly in the proof of Lemma \ref{boundGbigoh}. While a universal bound like Stechkin's constant $ \cal A $ is not needed for the proof and a constant depending on $ k $ would suffice, we thought it instructive to give this neater argument and to point out an interesting connection between power  partitions and the sphere packing problem.
\end{rem}
\subsection{Final estimates} We are now getting closer to our purpose and we only need a few last steps before giving the proof of Lemma \ref{boundGbigoh}. Let us begin by estimating the factors of the form $ P_{a,b} $ appearing in \eqref{formforG}.
\begin{Lem}
\label{majorarcs} If $ \tau\in\mathfrak M_{a,b}\cup\mathfrak m_{a,b}, $ then
 
\[\log|P_{a,b}(\tau')|\ll b\quad\text{as $ y\to 0. $}\]
\end{Lem}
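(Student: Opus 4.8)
The plan is to take logarithms and reduce the claim to a triple sum, then control that sum factor by factor. Since $P_{a,b}(\tau')=\prod_{h=1}^{b}\prod_{s=1}^{k}\prod_{\ell=0}^{\infty}(1-g(h,\ell,s))^{-1}$, we have
\[
\log|P_{a,b}(\tau')|=-\sum_{h=1}^{b}\sum_{s=1}^{k}\sum_{\ell=0}^{\infty}\log|1-g(h,\ell,s)|,
\]
so it suffices to bound the absolute value of this triple sum by a constant times $b$, uniformly in the coprime pair $(a,b)$ and in $\tau\in\mathfrak M_{a,b}\cup\mathfrak m_{a,b}$. The natural splitting is $\big|\log|1-g|\big|\ll |g|$ whenever $|g|$ is bounded away from $1$, together with a separate treatment of the finitely many factors per $(h,s)$ for which $g$ lies near $1$; the inner sum over $\ell$ is then a convergent series whose value must be shown to stay under control.

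The heart of the argument is a uniform magnitude estimate for $g$. Writing $\phi_s=\frac{\pi}{2k}(2s+k+1)$, the purely imaginary summand $-\tfrac{2\pi i h}{b}$ drops out of the modulus and one finds
\[
|g(h,\ell,s)|=\exp\!\left(\frac{(2\pi)^{\frac{k+1}{k}}(\ell+\mu_{h,s})^{\frac1k}}{b}\,|\tau'|^{-\frac1k}\cos\!\Big(\phi_s-\tfrac1k\Arg(\tau')\Big)\right).
\]
Here I would exploit that $\operatorname{Re}(\tau')=y>0$ forces $|\Arg(\tau')|<\tfrac{\pi}{2}$, hence $\tfrac1k|\Arg(\tau')|<\tfrac{\pi}{2k}$, so that the phase $\phi_s-\tfrac1k\Arg(\tau')$ stays within a controlled window around the directions $\phi_s$. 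This yields geometric-type decay $|g(h,\ell,s)|\ll \exp\!\big(-c_{h,s}(\ell+\mu_{h,s})^{\frac1k}\big)$ with $c_{h,s}\asymp |\tau'|^{-\frac1k}/b$, and the positivity $\mu_{h,s}>0$ (or $\mu_{h,s}=1$ when $d_h=0$) together with the phase $-2\pi i h/b$ keeps each factor $1-g$ bounded away from $0$, so the near-$1$ terms contribute only $O(1)$ apiece. Summing a bound of this shape over $\ell$ produces a quantity comparable to $c_{h,s}^{-k}$ for each $(h,s)$, and summing over the $k$ values of $s$ (a bounded count) and the $b$ values of $h$ is what must be shown to total $O(b)$.

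The main obstacle is making this summation genuinely uniform across the whole dissection, and in particular across the minor arcs, where $b$ is as large as $\big\lfloor y^{-\frac{k}{k+1}}\big\rfloor$ and the decay constant $c_{h,s}$ degrades. Here I would invoke the Farey bound \eqref{ineqxprim}, which on minor arcs gives $|x'|<y$ and hence $|\tau'|\asymp y$, to pin down $|\tau'|^{-1/k}$ precisely, and then use the arithmetic structure of the residues $d_h$ (defined by $ah^2\equiv d_h\pmod b$) governing the shifts $\mu_{h,s}$ to organize the $h$-sum. The delicate point is that a completely naive term-by-term estimate is too lossy in this regime, so the bookkeeping must track the interplay between the slow $\ell$-decay and the averaging over the complete residue system in $h$ in order to keep the total at the required order $b$ rather than something larger. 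Once this uniform accounting is carried out, the desired bound $\log|P_{a,b}(\tau')|\ll b$ follows with an implied constant independent of $a$, $b$, and $\tau$.
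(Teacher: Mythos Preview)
Your opening moves match the paper's: take logarithms, reduce to the triple sum, and bound $\big|\log|1-g|\big|$ via control on $|g|$. The gap is in your estimate of the decay rate. You write $c_{h,s}\asymp |\tau'|^{-1/k}/b$ and then worry that this degrades on the far arcs, forcing you to invoke an unspecified ``averaging over the complete residue system in $h$'' to recover $O(b)$. No such device is needed, and it is not clear the averaging you describe would actually work.

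What you are missing is that the Farey bound \eqref{ineqxprim} yields more than $|\tau'|\asymp y$: it gives the ratio inequality
\[
|\tau'|^{\frac{k+1}{k}}=(y^2+4\pi^2x'^2)^{\frac{k+1}{2k}}\le\Big(y^2+\frac{4\pi^2y^{2k/(k+1)}}{b^2}\Big)^{\frac{k+1}{2k}}\le\frac{c_4\,\operatorname{Re}(\tau')}{b^{(k+1)/k}},
\]
and this is precisely the hypothesis of Wright's Lemma~4 in \cite{WrightIII}. That lemma then delivers $|g(h,\ell,s)|\le e^{-c_5(\ell+1)^{1/k}}$ with $c_5=4(2\pi)^{1/k}/(kc_4)$ an \emph{absolute} constant, independent of $a,b,h,s,\tau'$: the factor $b$ in the denominator of the exponent of $g$ is exactly balanced by the $b^{(k+1)/k}$ coming from the Farey estimate. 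Once the decay constant is absolute, the inner sum over $\ell$ is a fixed convergent number, and
\[
\big|\log|P_{a,b}(\tau')|\big|\le\sum_{h=1}^{b}\sum_{s=1}^{k}\sum_{\ell\ge0}\big|\log(1-g(h,\ell,s))\big|\le kb\sum_{\ell\ge0}\big|\log\big(1-e^{-c_5(\ell+1)^{1/k}}\big)\big|\ll b
\]
follows immediately, with no appeal to the arithmetic of the $d_h$. Your phase-window remark is also insufficient as stated: for $s=k$ one has $\phi_k=\tfrac{3\pi}{2}+\tfrac{\pi}{2k}$, so $\cos(\phi_k-\tfrac{1}{k}\Arg(\tau'))$ can be positive and a bare $|g|<1$ does not follow from your argument; Wright's lemma handles this by tying the real part of the exponent directly to $\operatorname{Re}(\tau')/|\tau'|^{(k+1)/k}$ rather than to a naive cosine bound.
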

\begin{proof}
Using \eqref{ineqxprim} and letting $ y\to0 $, we have \[|\tau'|^{1+\frac1k}=(y^2+4\pi^2x'^2)^{\frac{k+1}{2k}}\le\left(y^2+\frac{4\pi^2y^{\frac{2k}{k+1}}}{b^2} \right)^{\frac{k+1}{2k}}\le\frac{c_4y}{b^{\frac{k+1}k}}=\frac{c_4 {\rm{Re}}\left(\tau'\right) }{b^{\frac{k+1}k}}, \]
for some $ c_4>0. $ Thus, \cite[Lemma 4]{WrightIII} gives
\[|g(h,\ell,s)|\le e^{-c_5(\ell+1)^{\frac1k}},\]
with $ c_5=\frac{4\sqrt[k]{2\pi}}{kc_4}, $ which in turn leads to
\[| \log| P_{a,b}(\tau') || \le \sum_{h=1}^{b}\sum_{s=1}^{k}\sum_{\ell=1}^{\infty}|\log(1-g(h,\ell,s))|\le kb\sum_{\ell=1}^{\infty}\big| \log\big( 1-e^{-c_5(\ell+1)^{\frac1k}}\big)\big| \ll b,\] concluding the proof.
\end{proof}
The next result gives a bound for $ G_k(q) $ on the minor arcs. As it is an immediate consequence of replacing $a=\frac1k, $ $ b=\frac1{k+1}, $ $ c=2^{k-1}, $ $ \gamma=\varepsilon $ and $ N=y^{-1} $ in \cite[Lemma 17]{WrightIII}, we omit its proof.
\begin{Lem}
\label{minors}
If $ \varepsilon>0 $ and $ \tau\in\mathfrak m_{a,b}, $ 
%and $  y^{-\frac13}<b\le y^{-\frac23}, $ 
then
\begin{equation*}\label{correctedminorarcs}
|\Log (G(q))|\ll_{\varepsilon}{y^{\frac{k2^{k-1}-k-1}{k(k+1)}-\varepsilon}}.
\end{equation*}  
\end{Lem}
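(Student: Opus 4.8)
The plan is to derive the minor-arc estimate from Wright's dedicated minor-arc bound rather than from the explicit transformation \eqref{formforG}, and it is worth recording first \emph{why} the latter does not suffice. On a minor arc one only knows $y^{-\frac1{k+1}}<b\le y^{-\frac k{k+1}}$, so Lemma \ref{majorarcs} bounds $\log|P_{a,b}(\tau')|$ by a quantity that may be as large as $y^{-\frac k{k+1}}$, while, using $|\tau'|\asymp y$ and $b>y^{-\frac1{k+1}}$, Lemma \ref{boundmaxReIm} controls the singular term only by $|\lambda_{a,b}|\,|\tau'|^{-\frac1k}\ll y^{-\frac1{k+1}-\varepsilon}$. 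Both of these \emph{grow} as $y\to0$, so \eqref{formforG} alone cannot produce the positive exponent $\frac{k2^{k-1}-k-1}{k(k+1)}$ asserted in the lemma. A genuinely minor-arc input, which exploits the oscillation of the underlying Weyl sums when the denominator $b$ is large, is therefore indispensable, and this is exactly the content of \cite[Lemma 17]{WrightIII}.

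I would first recast $G_k$ in the product form \eqref{newG}, so that it manifestly belongs to the class of $k$th-power generating functions handled by Wright: each of its factors runs over $n$ as $(1-q^{\nu n^k})^{\pm1}$ with $\nu\in\{1,\,2^k,\,2^{k+1}\}$, these three dyadic scalings being recorded by the structural constant of the lemma. In the parametrization of \cite[Lemma 17]{WrightIII} — whose parameters we denote $a,b,c,\gamma$ (internal to Wright's lemma and unrelated to the Farey denominators $a,b$ above) — the minor-arc estimate takes the shape $\ll_\gamma N^{a-bc+\gamma}$, where $a$ is the abscissa $\alpha=\frac1k$ of the Dirichlet series $D_k(s)$, the exponent $b=\frac1{k+1}$ cuts the minor arcs out of the Farey dissection, $c$ is the structural constant carrying the scalings, $\gamma>0$ is an arbitrary loss, and $N$ is the scale.

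Reading off the exponent under the substitution $a=\frac1k$, $b=\frac1{k+1}$, $c=2^{k-1}$, $\gamma=\varepsilon$ and $N=y^{-1}$ is then routine: since
\[
a-bc+\gamma=\frac1k-\frac{2^{k-1}}{k+1}+\varepsilon
\]
and $N=y^{-1}$, the bound becomes
\[
N^{a-bc+\gamma}=y^{\frac{2^{k-1}}{k+1}-\frac1k-\varepsilon}=y^{\frac{k2^{k-1}-k-1}{k(k+1)}-\varepsilon},
\]
which is the claim. I would also check that the exponent $\frac{k2^{k-1}-k-1}{k(k+1)}$ is positive for every $k\ge2$, i.e. that $k2^{k-1}>k+1$, confirming that in fact $\Log G_k(q)\to0$ uniformly over the minor arcs, as one expects there.

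\textbf{The main obstacle} is not the arithmetic of the substitution but the verification that $G_k$ — a \emph{ratio}, with the factor $(1-q^{2^kn^k})^{2}$ carrying the opposite sign — genuinely meets the hypotheses of \cite[Lemma 17]{WrightIII}, and that $c=2^{k-1}$ is the correct structural constant. As the failure of the transformation route already shows, the individual factors are \emph{not} small on the minor arcs; it is only their precise combination in $G_k$, together with the cancellation in the Gauss sums $S_k(a,b)$ for large $b$, that produces a bound tending to $0$. Tracing Wright's argument to confirm that the three scalings interact favourably — rather than re-deriving the minor-arc Weyl-sum cancellation from scratch, which for large $k$ is a substantial undertaking — is what makes the citation the efficient route, and this is presumably why the author separates Lemma \ref{minors} from the major-arc analysis and imports it directly.
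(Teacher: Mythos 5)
Your proposal is correct and takes essentially the same approach as the paper: the paper likewise states the lemma as an immediate consequence of Wright's minor-arc bound \cite[Lemma 17]{WrightIII} under the very substitutions you use, $a=\frac1k$, $b=\frac1{k+1}$, $c=2^{k-1}$, $\gamma=\varepsilon$, $N=y^{-1}$, and omits any further proof. Your exponent arithmetic reproducing $y^{\frac{k2^{k-1}-k-1}{k(k+1)}-\varepsilon}$ and your positivity check $k2^{k-1}>k+1$ coincide with the paper's statement and the remark following it.
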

\begin{rem}
Note that $ k^{2^{k-1}}>k+1 $ for any $ k\ge2, $ therefore the exponent of $ y $ in Lemma \ref{minors} is positive for a small enough choice of $ \varepsilon>0 $.
\end{rem}
At last, we need the following estimate, a modified version of \cite[Lemma 6]{AC}.

\begin{Lem}
\label{biglemma}
If $0\le a<b$ are coprime integers with $ b\ge2 $ and $ x\notin\bb Q, $ we have as $ y\to 0, $  for some $ c>0,$  
\begin{equation}\label{boundRe}
{\rm{Re}}\left( \frac{\lambda_{a,b}}{\sqrt[k]{\tau'}} \right)\le \frac{\lambda_{0,1}-c}{\sqrt[k] y}.  
\end{equation}

\end{Lem}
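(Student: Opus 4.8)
The plan is to prove the pointwise estimate $\mathrm{Re}(\lambda_{a,b}/\sqrt[k]{\tau'})\le(\lambda_{0,1}-c)/\sqrt[k]{y}$ by comparing the contribution of a general pair $(a,b)$ with $b\ge2$ against the ``trivial'' arc $(a,b)=(0,1)$, which carries the dominant term. First I would isolate the quantity $\lambda_{0,1}$: for $b=1$ the Gauss sum $S_k(m,1)=1$, so $\Lambda_{0,1}=\Gamma(1+\tfrac1k)\zeta(1+\tfrac1k)$, and $\lambda_{0,1}=(1+2^{-(k+1)/k}-1)\Gamma(1+\tfrac1k)\zeta(1+\tfrac1k)=2^{-(k+1)/k}\Gamma(1+\tfrac1k)\zeta(1+\tfrac1k)=A\Gamma(1+\tfrac1k)\zeta(1+\tfrac1k)$, matching the leading constant in \eqref{ErrorG}. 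This is real and positive, so on the trivial arc the bound is saturated with $\tau'=\tau$, $x'=x$; the whole point is to gain the factor $-c$ when $b\ge2$.

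The next step is to make $\mathrm{Re}(\tau'^{-1/k})$ explicit. Writing $\tau'=y-2\pi ix'=|\tau'|e^{i\phi}$ with $\phi=\Arg(\tau')$, one has $\mathrm{Re}(\tau'^{-1/k})=|\tau'|^{-1/k}\cos(\phi/k)$, and since $|x'|\le y^{k/(k+1)}/b\to0$ relative to $y$ as $y\to0$ on any arc, $\phi\to0$ and $\cos(\phi/k)=1+O(\phi^2)$. The crude bound $\mathrm{Re}(\lambda_{a,b}/\sqrt[k]{\tau'})\le|\lambda_{a,b}|\cdot|\tau'|^{-1/k}$ together with $|\tau'|\ge y$ then gives $\mathrm{Re}(\lambda_{a,b}/\sqrt[k]{\tau'})\le|\lambda_{a,b}|\,y^{-1/k}$. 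Plugging in Lemma \ref{boundmaxReIm}, namely $|\lambda_{a,b}|<3\mathcal A\,\Gamma(1+\tfrac1k)\zeta(1+\tfrac1k)\,b^{-1/k}\sum_{d\mid b}\tfrac1d$, reduces everything to a purely arithmetic comparison: it suffices to show that
\begin{equation*}
3\mathcal A\,b^{-\frac1k}\sum_{d\mid b}\frac1d\le A-c'
\end{equation*}
for all $b\ge2$ and some $c'>0$, where $A=2^{-(k+1)/k}/k$ is the residue from the definition of $D_k$, because $\lambda_{0,1}=A\,\Gamma(1+\tfrac1k)\zeta(1+\tfrac1k)$ and the common factor $\Gamma(1+\tfrac1k)\zeta(1+\tfrac1k)$ cancels.

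The hard part will be verifying this arithmetic inequality uniformly in both $b\ge2$ and $k\ge2$, since the right side $A=2^{-(k+1)/k}/k$ decays quickly in $k$ while the left side carries the fixed Stechkin constant $\mathcal A\approx4.71$; a naive bound $\sum_{d\mid b}\tfrac1d\le\sum_{d\ge1}\tfrac1d$ diverges, so I would instead exploit the decay of $b^{-1/k}$ together with the elementary estimate $\sum_{d\mid b}\tfrac1d=\tfrac{\sigma(b)}{b}\ll\log\log b$. The key observation is that $b^{-1/k}\sum_{d\mid b}\tfrac1d$ is largest for the smallest admissible $b$, so the decisive case is $b=2$, where the left side becomes $3\mathcal A\,2^{-1/k}(1+\tfrac12)=\tfrac{9\mathcal A}2\,2^{-1/k}$; comparing with $A=2^{-(k+1)/k}/k$ one must confirm the gap does not close for any $k\ge2$, possibly after absorbing the $\cos(\phi/k)$ refinement to recover a genuine $-c/\sqrt[k]{y}$ rather than only a non-strict bound. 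I expect to organize this as a short case analysis (small $b$ checked against the explicit constants, large $b$ handled by the growth rates), mirroring the structure of \cite[Lemma 6]{AC} but with the Banks--Shparlinski bound replacing the numerical check of \cite[Lemma 5]{AC}.
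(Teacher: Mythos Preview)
Your approach has a genuine gap, and it stems from a misreading of the hypotheses. In the lemma, $x\notin\mathbb Q$ is \emph{fixed} and $y\to 0$; the pair $(a,b)$ is also fixed. Hence $x'=x-\tfrac ab$ is a fixed nonzero real number, and the Farey bound $|x'|\le y^{k/(k+1)}/b$ from \eqref{ineqxprim} does \emph{not} apply here (that bound constrains $x$ in terms of $y$ along an arc, which is a different regime). Consequently $\phi=\Arg(\tau')=\arctan(-2\pi x'/y)\to\pm\tfrac\pi2$, not $\phi\to 0$ as you assert. Your step ``$\cos(\phi/k)=1+O(\phi^2)$'' is therefore irrelevant, and the crude estimate $|\tau'|\ge y$ throws away exactly the mechanism that makes the lemma true.

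Worse, the arithmetic inequality you reduce to is simply false. After cancelling $\Gamma(1+\tfrac1k)\zeta(1+\tfrac1k)$, you need $3\mathcal A\,b^{-1/k}\sum_{d\mid b}d^{-1}<2^{-(k+1)/k}$ (note $\lambda_{0,1}=2^{-(k+1)/k}\Gamma(1+\tfrac1k)\zeta(1+\tfrac1k)$, so the right side is $2^{-(k+1)/k}$, not $A=2^{-(k+1)/k}/k$ as you wrote). At $b=2$ the left side is $\tfrac{9\mathcal A}{2}\cdot 2^{-1/k}$ while the right side is $\tfrac12\cdot 2^{-1/k}$, so you would need $9\mathcal A<1$, whereas $\mathcal A\approx 4.71$. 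The bound from Lemma~\ref{boundmaxReIm} is nowhere near strong enough to force $|\lambda_{a,b}|<\lambda_{0,1}$; indeed the paper never claims this. What the paper actually does is write $\tau'=y(1+it)$ with $t=-2\pi x'/y$, so that
\[
\mathrm{Re}\!\left(\frac{\lambda_{a,b}}{\sqrt[k]{\tau'}}\right)=\frac{f_k(t)}{\sqrt[k]{y}},\qquad f_k(t)=\frac{1}{(1+t^2)^{1/(2k)}}\Bigl(\cos\tfrac{\arctan t}{k}\,\mathrm{Re}\,\lambda_{a,b}+\sin\tfrac{\arctan t}{k}\,\mathrm{Im}\,\lambda_{a,b}\Bigr),
\]
and since $x'\ne 0$ is fixed, $|t|\to\infty$ as $y\to 0$, whence $f_k(t)\to 0$ by the $(1+t^2)^{-1/(2k)}$ factor. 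Thus for $y$ small enough $f_k(t)<\lambda_{0,1}-c$ for any $0<c<\lambda_{0,1}$, which is the whole content of \eqref{boundRe}. Lemma~\ref{boundmaxReIm} is used only to control the size of $\lambda_{a,b}$ inside $f_k$ (bounded, not small), not to beat $\lambda_{0,1}$.
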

\begin{proof}Note that \[\lambda_{0,1}=2^{-\frac{k+1}k}\Lambda_{0,1} =\frac1{2^{1+\frac1k}}\Gamma\left( 1+\frac 1k\right)\zeta\left(1+\frac1k \right)=A\Gamma\left(\dfrac1k \right) \zeta\left(1+\dfrac1k \right).\]
Writing $ \tau'=y+ity  $ for some $ t\in\bb R, $ we have
\begin{align*}
{\rm{Re}}\left( \frac{\lambda_{a,b}}{\sqrt[k]{\tau'}} \right)&=\frac{1}{\sqrt[k]y}{\rm Re}\left(\frac{\lambda_{a,b}}{\sqrt[k]{1+it }} \right)=\frac{1}{\sqrt[k]y}{\rm Re}\left( \frac{\lambda_{a,b}}{\sqrt[2k]{1+t^2}e^{\frac ik\arctan t}}\right)\\
&= \frac{1}{\sqrt[k]y\sqrt[2k]{1+t^2}}\left( \cos\left(\frac{\arctan t}{k}\right) {\rm Re}\left( \lambda_{a,b}\right)+\sin  \left(\frac{\arctan t}{k}\right){\rm Im} \left( \lambda_{a,b}\right)\right).
\end{align*}
If we denote  \[f_k(t)=\frac{1}{\sqrt[2k]{1+t^2}}\left( \cos\left(\frac{\arctan t}{k}\right) {\rm Re}\left( \lambda_{a,b}\right)+\sin  \left(\frac{\arctan t}{k}\right){\rm Im} \left( \lambda_{a,b}\right)\right),\]
%Assuming that we can bound \begin{equation}
%\label{boundReIm}
%| {\rm Re}\left( \lambda_{a,b}\right)|,|{\rm Im} \left( \lambda_{a,b}\right)|<D, 
%\end{equation}
%we would have 
%\[ {\rm{Re}}\left( \frac{\lambda_{a,b}}{\sqrt{\tau'}} \right)<\frac{D}{\sqrt y(1+t^2)^{\frac14}}\left( \left| \cos\left(\frac{\arctan t}{2}\right)\right|+ \left| \sin\left(\frac{\arctan t}{2}\right)\right| \right) .\]
%We next show that this is bounded by \[\frac{1.14\cdot D}{\sqrt{y}(1+c^2)^{\frac14}}.\]
we clearly have $ f_k(t)\to0 $ as $ |t|\to\infty. $
Note now that the choice of $ x $ is independent from that of $ y, $ and recall from \eqref{tautauprime} that $ \tau'=y-2\pi ix', $ with $ x'=x-\frac ab, $ hence $ t=-\frac{x'}{2\pi y}. $ The assumption $ x\notin \bb Q $ implies $ x'\ne0, $ and so $ |t|\to\infty $ as $ y\to0. $ Consequently, we have $ f_k(t)\to 0 $ as $ y\to0. $ In combination with Lemma \ref{boundmaxReIm} and the well-known fact that $ \sigma_0(n)=o(n^{\epsilon}) $ for any $ \epsilon>0 $ (for a proof see, e.g., \cite[p. 296]{Apostol}), where $ \sigma_0(n) $ denotes the number of divisors of $ n, $ this completes the proof. 
 \end{proof}

% The bound is uniform since:
%\[0<|x'| =\left| x-\frac ab\right|\le\frac ab+\frac12<\frac 32. \]
%(Recall that now we are only interested to study $ x $ in the intervals $ [-1/2,y^\beta]\cup [y^{\beta},1/2].  $)
%Therefore, in Lemma 6, where we have $ \tau'=y+ity, $ when $ y\to 0 $ we must have $ t\to \infty, $ since $ ty=-2\pi x'=-2\pi\left( \frac ab-x\right) \ne0 $ is then a fixed number.

\subsection{Estimate for $J_k(n)$} We are now equipped with all the machinery needed for Lemma \ref{boundGbigoh}. 
\begin{proof}[Proof of Lemma \ref{boundGbigoh}]  If $\tau\in\mathfrak m_{a,b},$ then it suffices to apply Lemma \ref{minors} (because, as $ y\to 0, $ a negative power of $ y $ will dominate any positive power of $ y; $ in particular, also the term $ jy $ coming from the factor $ e^{j\tau'} $ in the case when $k$ is odd), so let us assume that $ \tau\in\mathfrak M_{a,b}. $ 
\par We first consider the behavior near 0, corresponding to $ a=0, $ $ b=1,$ $ \tau=\tau'=y-2\pi ix. $ 
%and $ \cal F_{0,1}=\left[ 0,\frac{1}{y^{-\frac23}+1}\right].  $ 
Writing $ y^{\beta}=y^{\frac{2k+1}{2k}-\varepsilon} $ with $ \varepsilon>0 $ (here we use the second inequality from \eqref{ineqbeta}), we have, on setting $ b=1 $ in \eqref{ineqxprim}, 
\begin{equation}
\label{cusp0}
y^{\frac{2k+1}{2k}-\varepsilon}\le |x|=|x'|\le y^{\frac k{k+1}}.
\end{equation}
By \eqref{formforG} we get
\[G_k(q)=Ce^{j\tau}\exp \left( \frac{\lambda_{0,1}}{\sqrt[k]\tau}\right)  \frac{P_{0,1}(\tau)P_{0,1}(2^{k+1}\tau)}{P_{0,1}(2^k\tau )^2}  \]
for some $ C>0 $. Thus, by Lemma \ref{majorarcs} we obtain
\[\log|G_k(q)|=\frac{\lambda_{0,1}}{\sqrt[k]{|\tau|}}+jy+O(1).\]
Using \eqref{cusp0} to prove the first inequality below and expanding into Taylor series to prove the second, we have, on letting $ y\to0, $
\[\frac{1}{\sqrt[k]{|\tau|}}=\frac{1}{\sqrt[k]{y}}\frac{1}{\left( 1+\frac{4\pi^2x^2}{y^2}\right)^{\frac1{2k}} }\le \frac{1}{\sqrt[k]{y}}\frac{1}{\left( 1+4\pi^2y^{\frac1k-2\varepsilon}\right) ^{\frac1{2k}}}\le \frac{1}{\sqrt[k]{y}}\left( 1-c_6y^{\frac1k-2\varepsilon}\right) \] for some $ c_6>0, $
and this concludes the proof in this case. 
%on choosing $ c=c_8 $.

To complete the proof, let $ \tau\in\mathfrak M_{a,b}, $ with $ 2\le b\le y^{-\frac1{k+1}}. $ We distinguish two cases. First, let us deal with the case when $ x\notin\bb Q. $ By \eqref{formforG} and Lemma \ref{majorarcs} we obtain 
\begin{equation}\label{FullExpress}
\log|G_k(q)|={\rm Re} \left( \frac{\lambda_{a,b}}{\sqrt[k]{\tau'}} \right)+jy+O\left( y^{-\frac1{k+1}}\right) ={\rm Re} \left( \frac{\lambda_{a,b}}{\sqrt[k]{\tau'}} \right)+O\left( y^{-\frac1{k+1}}\right)    \end{equation}
as $ y\to0. $ Since by Lemma \ref{biglemma} there exists $ c_7>0 $ such that 
\begin{equation}\label{ND}
{\rm Re} \left( \frac{\lambda_{a,b}}{\sqrt[k]{\tau'}} \right)\le \frac{\lambda_{0,1}-c_7}{\sqrt[k] y},
\end{equation}
we infer from \eqref{ND} that, as $ y\to 0, $  we have
\[\log|G_k(q)|\le \frac{\lambda_{0,1}-c_8}{\sqrt[k] y}  \]
for some $ c_8>0 $
and the proof is concluded under the assumption that $ x\notin\bb Q $.
\par Finally, assume that $ x=\frac ab, $ that is, $ x'=0 $ and $ \tau=y-2\pi i\frac ab. $  We claim that the estimate \eqref{ErrorG} is satisfied with the same implied constant, call it $ C_1. $ Suppose by sake of contradiction that this is not the case. Then there exist infinitely small values of $ y>0 $ for which 
\[|G_k( e^{-\tau})| \ge C_2 \exp\left( \frac{\lambda_{0,1}}{\sqrt[k]y}   -cy^{-\varepsilon} \right), \] with $ C_2>C_1. $
However, we can pick now $ x'\notin\bb Q $ infinitely small and set $ \tau_1=y-2\pi i\left(x'+\frac ab \right).  $ For a fixed choice of $ y, $ we have $ t\to 0 $ as $ x'\to 0; $ thus, by the same calculation done in the proof of Lemma \ref{biglemma}, we obtain 
\begin{equation}\label{LimitRe}
{\rm{Re}}\left( \frac{\lambda_{a,b}}{\sqrt[k]{\tau'_1}} \right)\to {\rm{Re}}\left( \frac{\lambda_{a,b}}{\sqrt[k]{y}} \right)={\rm{Re}}\left( \frac{\lambda_{a,b}}{\sqrt[k]{\tau'}} \right), \end{equation}
since $ f_k(t)\to 1. $  On noting that $ {\rm Re}(\tau_1')={\rm Re}(\tau)=y, $ while clearly all factors of the form  $ |P_{a,b}(k\tau_1')|$ tend to $|P_{a,b}(k\tau')| $ as $ x'\to0, $  we obtain a contradiction, in the sense that, on one hand, \eqref{FullExpress} and \eqref{LimitRe} yield
\[|G_k(e^{-\tau_1})|\to|G_k(e^{-\tau})|\] 
as $ x'\to0, $ 
whereas on the other, for a sufficiently small choice of $ y>0, $ we have 
\[|G_k(e^{-\tau})|-|G_k(e^{-\tau_1})|\ge(C_2-C_1) \exp\left( \frac{\lambda_{0,1}}{\sqrt[k]y}   -cy^{-\varepsilon} \right),\]
quantity which gets arbitrarily large for sufficiently small choices of $ y>0. $  
\end{proof}
\begin{xcom}
It is in this part where our proof differs substantially from that given in \cite{AC} in the case $ k=2. $ More precisely, \cite[Lemma 5]{AC} was needed to prove the inequality \eqref{boundRe} for all values of $ y, $ inequality which was then used in the estimates made in the proof of \cite[Lemma 2]{AC}, the equivalent of Lemma \ref{boundGbigoh} from the present paper. However, we are only interested in establishing the estimates from Lemma \ref{boundGbigoh} on letting $ y\to0, $ which is why we only need the bound  \eqref{boundRe} to hold as $ y\to0. $ The argument presented in Lemma \ref{biglemma} further tells us that, in order for this to happen, the estimate \eqref{Shparli}, obtained using the bound on Gauss sums found by Banks and Shparlinski \cite{BS}, is enough. As a consequence, we can avoid the rather involved numerical check done in \cite[Lemma 5]{AC}, a check which we would, in fact, not even be able to implement for all values $ r\ge2. $ In particular, the present argument gives a simplified proof of the results from \cite{AC}.  
\end{xcom}
\section{Analytic proof of Theorem \ref{ThmPowers}}\label{Proof of Theorem 2}
In this section we give the analytic proof of Theorem \ref{ThmPowers}.
Having already proven the two estimates from Lemmas \ref{smallxmaintermG}--\ref{boundGbigoh}, the rest is only a matter of careful computations. The reader is reminded that, because of the reformulation from Section \ref{reformulation}, what we are interested in is computing asymptotics for the coefficients
\begin{equation}
\label{Integral}
a_k(n)=e^{ny}\int_{-\frac12}^{\frac12}G_k(e^{-y+2\pi ix})e^{-2\pi inx} dx.
\end{equation}
\subsection{Saddle-point method}  Recall that, as defined in Section \ref{Estimates}, we denote $ \alpha=\frac 1k $ and $ A=\frac1k\cdot 2^{-\frac{k+1}k} ,$  notation which we keep, for simplicity, in what follows. Before delving into the proof, we make a particular choice for $ y $ as a function of $ n. $ More precisely, let 
%(choice suggested by the saddle-point method)
\begin{equation}\label{choicey}
y=n^{-\frac 1{\alpha+1}}(  A\Gamma(\alpha+1) \zeta(\alpha+1)  )^{\frac 1{\alpha+1}}=n^{-\frac k{k+1}}\left(  A\Gamma\left(\dfrac1k \right) \zeta\left(1+\dfrac1k \right)  \right)^{\frac k{k+1}}, \end{equation}
and write
$m=ny.$ 
%so that \begin{equation}
%\label{mny}
%m=n^{\frac\alpha{\alpha+1}}(  A\Gamma(\alpha+1) \zeta(\alpha+1)  )^{\frac 1{\alpha+1}}.
%\end{equation} 
\par
The reason for this choice of $ y $ is motivated by the \textit{saddle-point} method and becomes clear once the reader recognizes in \eqref{choicey} the quantity appearing in Lemmas \ref{smallxmaintermG} and \ref{boundGbigoh}. As the maximum absolute value of the integrand from \eqref{Integral} occurs for $ x=0, $ around which point Lemma \ref{smallxmaintermG} tells us that the integrand is well approximated by \[\exp(  A\Gamma(\alpha) \zeta(\alpha+1)y^{-\alpha}+ny), \] the saddle-point method suggests maximizing this expression, that is, finding the value of $ y $ for which \[\frac d{dy}(\exp(  A\Gamma(\alpha) \zeta(\alpha+1)y^{-\alpha}+ny))=0.\] 
\subsection{Completing the proof} We have now all ingredients necessary to conclude the proof of Theorem \ref{ThmPowers}. The proof merely consists of a skillful computation, which can be carried out in two ways. Since Lemma \ref{smallxmaintermG} and Lemma \ref{boundGbigoh} are completely analogous to the two estimates found by Meinardus (combined in the \textit{Hilfssatz} from \cite[p. 390]{Mein}), one way is to follow his approach and carry out the same computations done in \cite[pp. 392--394]{Mein}. The second way is slightly more explicit and is based entirely on the computation done in the proof of the case $ k=2 $ from \cite[pp. 139--141]{AC}. For sake of completeness and for comparison with the corresponding computation, we will sketch in what follows the main steps of the argument, while leaving some details and technicalities as an exercise for the interested reader.    
%\begin{Thm}For any $ r\ge2 $ and  $ n $ sufficiently large, we have	
%\[p_k(0,2,n)\sim p_k(1,2,n)\] 
%and
%\begin{equation}\label{Ineqs}
%\begin{cases}
%p_k(0,2,n)>p_k(1,2,n) & \text{if~$ n $~is even,}\\
%p_k(0,2,n)<p_k(1,2,n) & \text{if~$ n $ is odd}.
%\end{cases}
%\end{equation}
%
%\end{Thm}
\begin{proof}[Analytic proof of Theorem \ref{ThmPowers}] We begin by proving  the inequalities \eqref{IneqsPowers}.
By Lemma \ref{boundGbigoh} and \eqref{choicey} we have
\begin{align*}\label{correctbounderror}
J_k(n)&=e^{ny} \int_{y^{\beta}\le|x|\le\frac12}  G( e^{-y+2\pi ix}) e^{-2\pi inx}dx= e^{ny} \int_{y^{\beta}\le|x|\le\frac12} O \Big( e^{y^{-\alpha}A\Gamma( \alpha) \zeta(\alpha+1 )-cy^{-\varepsilon}}\Big)  dx\nonumber\\&= 
e ^{ny} \cdot O\left( e^{y ^{-\alpha}A\Gamma( \alpha) \zeta(\alpha+1 )-cy ^{-\varepsilon}}\right)  =
O \Big( e^{n^{\frac{\alpha}{\alpha+1}}\left( 1+\frac1\alpha\right)(A\Gamma(\alpha+1)\zeta(\alpha+1))^{\frac1{\alpha+1}} -C_1n^{\varepsilon_1}}\Big)  
\end{align*}
as $ n\to0, $ with $ \varepsilon_1=\frac{k\varepsilon}{k+1}>0 $ and some $ C_1>0. $
%which is the same as above! So I think the bound for $ R_1 $ and the above computations are correct (with the choice \eqref{choicey} for $ y $). 
\par We now compute the main asymptotic contribution, which will be given by $ I_k(n) $. Let $ n\ge  n_1 $ be large enough so that $y^{\beta-1}\le\frac{1}{2\pi}. $ This choice allows us to apply Lemma \ref{smallxmaintermG}, as it ensures $ |x|\le\frac12 $ and $ |\Arg (\tau)|\le \frac{\pi}{4}. $  From Lemma \ref{smallxmaintermG} we obtain
\begin{equation}\label{correctmainterm}
I_k(n)= \frac{e^{ny}}{2^{\frac{k-1}2}} \int_{-y^{\beta}}^{y^{\beta}}  e^{A\Gamma(\alpha) \zeta(\alpha+1) \tau^{-\alpha} +O(y^{\varepsilon})-2\pi inx }dx. \end{equation}
%\Alex: [In \cite[p.~16]{BM} this was
%\[e^{ny} \int_{|x|\le y^{\beta}}  G(e^{-y+2\pi ix})e^{-2\pi inx}dx\le \frac{e^{ny}}{\sqrt2} \int_{|x|\le y^{\beta}}  e^{\frac1{2\sqrt2}\Gamma\left( \frac12\right)\zeta\left(\frac32 \right)\frac{1}{\sqrt{\tau}} +O(y^{\varepsilon})-2\pi inx }dx. \]
%However, \eqref{correctmainterm} should be the correct estimate, as we shall also see later when we introduce $ B. $]\\
Writing 
\[\tau^{-\alpha}=\frac{1}{\sqrt[k]{\tau}}=\frac{1}{\sqrt[k]{y}}+\left( \frac{1}{\sqrt[k]{\tau}}-\frac{1}{\sqrt[k]{y}}\right), \]
we can further express \eqref{correctmainterm} as 
\begin{align*}
I_k(n)&= \frac{e^{ny}}{2^{\frac{k-1}2}}\int_{-y^{\beta}}^{y^{\beta}} e^{A\Gamma(\alpha) \zeta(\alpha+1)\frac{1}{\sqrt[k]y}} e^{A\Gamma(\alpha) \zeta(\alpha+1)\left( \frac{1}{\sqrt[k]{\tau}}-\frac1{\sqrt[k]y}\right)  }e^{-2\pi inx+O(y^{c_0})}dx\\
%&= \frac{1}{2^{(k-1)/2}}\int_{-y^{\beta}}^{y^{\beta}}\left( e^{ny+A\Gamma(\alpha) \zeta(\alpha+1)\frac{1}{\sqrt[k]y}}\right) e^{\frac{1}{4\sqrt2}\Gamma\left( \frac12\right) \zeta\left( \frac32\right)\left( \frac{1}{\sqrt{\tau}}-\frac1{\sqrt y}\right)  }e^{-2\pi inx+O(y^{c_0})}dx\\
&=\frac{e^{\left(1+\frac1\alpha \right) n^{\frac\alpha{\alpha+1}}(A\Gamma(\alpha+1) \zeta(\alpha+1))^{\frac1{\alpha+1}} }}{2^{\frac{k-1}2}}\int_{-y^{\beta}}^{y^{\beta}}e^{\frac{A\Gamma(\alpha) \zeta(\alpha+1)}{\sqrt[k] y}\Big( \frac{1}{\sqrt[k]{1-\frac{2\pi ix}{y}}}-1\Big)  }e^{-2\pi inx+O(y^{c_0})}dx.
\end{align*} 
With $ u=-\frac{2\pi x}{y}, $ we obtain 
\begin{equation}\label{integralfinal}
I_k(n)=\frac{ye^{\left(1+\frac1\alpha \right) n^{\frac\alpha{\alpha+1}}(A\Gamma(\alpha+1) \zeta(\alpha+1))^{\frac1{\alpha+1}} }}{2\pi\cdot 2^{\frac{k-1}2}}\int_{-2\pi y^{\beta-1}}^{2\pi y^{\beta-1}}e^{\frac{A\Gamma(\alpha) \zeta(\alpha+1)}{\sqrt[k] y}\left( \frac{1}{\sqrt[k]{1+iu}}-1\right)  +inuy+O(y^{c_0})}dx.
\end{equation}
Set, for simplicity, $ B=A\Gamma(\alpha) \zeta(\alpha+1). $ 
%\par \Alex: Note that for the remainder of this section, I have rewritten everything from the corresponding part of \cite[p. 17]{BM}. I added in red some corrected parameters (please compare with \cite[p.~17]{BM}) and I have explained in detail certain steps which were merely sketched in there.\\
%With the above choices of $ y $ and $ B $ we have \[. \]
We have the Taylor series expansion
\[
\frac{1}{\sqrt[k]{1+iu}}=1-\frac{iu}{k}-\frac{(k+1)u^2}{2k^2}+O(|u|^3), \]
from where, on recalling that $ |u|\le2\pi y^{\beta-1} $ and using \eqref{choicey} to compute $ B=kny^{1+\frac1k}, $ it follows that   \begin{align*}B\frac{1}{\sqrt[k] y}\left( \frac{1}{\sqrt[k]{1+iu}}-1\right)+inuy&=-\frac{Biu}{k\sqrt[k] y}+inuy-\frac{(k+1)Bu^2}{2k^2\sqrt[k]y}+O\left(\frac{|u|^3}{\sqrt[k] y} \right)\\&=\frac{(k+1)Bu^2}{2k^2\sqrt[k]y}+O\left(n^{\frac1{k+1}\left(1+\frac{3(1-\beta)}{\alpha} \right) }\right).\end{align*}
%However, an easy computation shows that for $ y $ chosen as in \eqref{choicey} we have $B=2ny^{\frac32},$
%hence \[-\frac{Biu}{2\sqrt y}+inuy=0,  \]
%and, using \eqref{choicey} and the fact that $ |u|\le2\pi y^{\beta-1}, $ 
For an appropriate constant $ C_2, $ we may then change the integral from the right-hand side of \eqref{integralfinal} into
\begin{align*}
\int_{|u|\le 2\pi y^{\beta-1}}e^{B\frac{1}{\sqrt[k] y}\left( \frac{1}{\sqrt[k]{1+iu}}-1\right) +inuy +O(y^{c_0} ) }du
&=\int_{|u|\le C_2 }e^{-\frac{(k+1)Bu^2}{2k^2\sqrt[k] y}}  e^{O\left( y^{c_0}+\frac{|u|^3}{\sqrt[k] y}\right) }   du\\
&=\int_{|u|\le C_2  }e^{-\frac{(k+1)Bu^2}{2k^2\sqrt[k] y}}   e^{O\left( n^{-\frac{kc_0}{k+1}}+ n^{\frac{1+3k(1-\beta)}{k+1}}\right) }   du\\
%&=\int_{|u|\le C_1  }e^{-\frac{3\sqrt[3]{2n}\sqrt[3]{B^2}u^2}{8}}\cdot  e^{O\left( n^{-c_0}+u^3n^{\frac13}\right) }   du\\
&=\int_{|u|\le C_2 }e^{-\frac{(k+1)Bu^2}{2k^2\sqrt[k] y}}\bigg( 1+\bigg( e^{O\left( n^{-\frac{kc_0}{k+1}}+ n^{\frac{1+3k(1-\beta)}{k+1}}\right) }-1\bigg) \bigg)    du.
\end{align*}
From the first inequality in \eqref{ineqbeta}, we see that $ 1+3k(1-\beta)<0, $ and thus  
%%This is (finally!) where we use the fact that $ \beta>\frac76, $ see \eqref{ineqbeta}. We obtain this because $ |u| $ has to be between the limits of the integrand above, thus 
%\[u^3n^{\frac13}\le c_9\frac{n^{\frac13}}{n^{2(\beta-1)}}=c_9\frac{n^{\frac13}}{n^{\frac13+\epsilon}},\]
%for some $ c_9>0, $ and thus 
\[e^{O\left( n^{-\frac{kc_0}{k+1}}+ n^{\frac{1+3k(1-\beta)}{k+1}}\right) }-1=e^{O\big( n^{-\frac{kc_0}{k+1}}+n^{-\frac16+\frac{\delta}4}\big) }-1=O(n^{-\kappa} ),  \]
where $ \kappa=\frac1{k+1}\min\left\lbrace kc_0,\frac12-\frac{3\delta}4\right\rbrace . $ We further get, on using \eqref{defbeta} when changing the limits of integration,
\begin{align}
\int_{|u|\le 2\pi y^{\beta-1}}e^{B\frac{1}{\sqrt y}\left( \frac{1}{\sqrt{1+iu}}-1\right) +inuy +O(y^{c_0} ) }du
&=\int_{|u|\le C_2  }e^{-\frac{(k+1)Bu^2}{2k^2\sqrt[k] y}}(  1+ O( n^{-\kappa}) )  du\nonumber\\
&=c(n)\int_{|v|\le C_3\cdot n^{\frac{\delta}{4(k+1)}}} e^{-v^2}(  1+ O( n^{-\kappa}) )dv,\label{Gauss}
\end{align}
where $c(n)=\sqrt{\frac{2k}{k+1}}(\alpha Bn^{\alpha})^{-\frac1{2(\alpha+1)}}$
and $ C_3>0 $ is a constant.
By letting $ n\to\infty, $ and turning the integral from \eqref{Gauss} into a Gauss~integral, we obtain \begin{equation}\label{GaussError}
\int_{|u|\le 2\pi y^{\beta-1}}e^{B\frac{1}{\sqrt[k] y}\left( \frac{1}{\sqrt[k]{1+iu}}-1\right) +inuy +O(y^{c_0} ) }du=c(n) \sqrt{\pi}(  1+ O( n^{-\kappa_1}) ),\end{equation}
where $ \kappa_1=\frac1{k+1}\min\left\lbrace kc_0-\frac{\delta}{4},\frac12-\delta \right\rbrace . $ Putting together \eqref{integralfinal}, \eqref{Gauss} and \eqref{GaussError} we see that, as predicted by Meinardus (Theorem 3),
the main asymptotic contribution for our coefficients is given by
\begin{equation}\label{p2diff}
a_k(n)\sim Cn^{-\frac{\alpha+2}{2(\alpha+1)}}e^{n^{\frac{\alpha}{\alpha+1}}\left( 1+\frac1\alpha\right)(A\Gamma(\alpha+1)\zeta(\alpha+1))^{\frac1{\alpha+1}} } ,
\end{equation}
where \[C=\frac{1}{\sqrt{2^k(\alpha+1)\pi}}(A\Gamma(\alpha+1)\zeta(\alpha+1))^{\frac1{2(\alpha+1)}}. \]
This shows that the inequalities in \eqref{IneqsPowers} are true for $ n\to\infty. $ However, the combinatorial proof given in Section \ref{combiproof} shows that the inequalities hold for every $ n\ge1 $ (apart perhaps, for each $ k\ge2, $ from a few small values of $ n $ for which the inequalities might turn to equalities).  The proof can be now completed in two ways: either by adding the estimate \eqref{p2diff} for $ a_k(n)=(-1)^n(p_k(0,2,n)-p_k(1,2,n)) $ to that obtained by Wright  for $ p_k(n)=p_k(0,2,n)+p_k(1,2,n) $ (see \cite[Theorem 2]{WrightIII}), or by invoking the recent work of Zhou (see \cite[Corollary 1.2]{Zhou}).\end{proof}

\section{Open questions}\label{Open}
It would be of interest to see if Theorems \ref{Conj1} and \ref{ThmPowers} admit analogues for moduli $ m\ge3 $. Another conjecture formulated in the unpublished manuscript of Bringmann and Mahlburg \cite{BM} states the following. 
\begin{Con}[Bringmann--Mahlburg, 2012]
As $ n\to\infty $, we have 
\begin{equation}\label{Mod3}
p_2(0,3,n)\sim p_2(1,3,n)\sim p_2(2,3,n)\sim\frac{p_2(n)}{3}\end{equation}
and
\begin{equation}\label{Mod3Ineqs}
\begin{cases}
p_2(0,3,n)>p_2(1,3,n)>p_2(2,3,n) & \text{if~} n\equiv0\pmod*3, \\
p_2(1,3,n)>p_2(2,3,n)>p_2(0,3,n) & \text{if~} n\equiv1\pmod*3, \\
p_2(2,3,n)>p_2(0,3,n)>p_2(1,3,n) & \text{if~} n\equiv2\pmod*3.
\end{cases}
\end{equation}
\end{Con}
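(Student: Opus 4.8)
The plan is to feed the three twisted generating functions $H_2(\zeta_3^{\,j};q)$ into the circle method, exactly as the orthogonality relation \eqref{orthogonal} dictates. Specializing it to $k=2$, $m=3$ and using $\zeta_3^{-2a}=\overline{\zeta_3^{-a}}$ together with the fact that the coefficients of $H_2(\zeta_3^2;q)$ are the complex conjugates of those of $H_2(\zeta_3;q)$, one finds, on writing $c_n$ for the coefficient of $q^n$ in $H_2(\zeta_3;q)$,
\[ p_2(a,3,n)=\tfrac13\big(p_2(n)+2\,\mathrm{Re}(\zeta_3^{-a}c_n)\big)=\tfrac13\big(p_2(n)+2|c_n|\cos(\theta_n-\tfrac{2\pi a}{3})\big),\qquad \theta_n=\arg c_n. \]
Hence \eqref{Mod3} is equivalent to $|c_n|=o(p_2(n))$, and the whole block of strict inequalities \eqref{Mod3Ineqs} is equivalent to the single assertion that, for all large $n$, the angle $\theta_n$ lies in the open arc $\big(\tfrac{2\pi n}{3},\tfrac{2\pi n}{3}+\tfrac{\pi}{3}\big)$ modulo $2\pi$. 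Everything therefore reduces to the magnitude and, crucially, the phase of the single complex sequence $c_n$.

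First I would determine the asymptotics of $c_n$ by applying Wright's transformation \eqref{Wrightexp} (equivalently, a twisted version of the Mellin computation of Lemma \ref{smallxmaintermG}) to $H_2(\zeta_3;q)=\prod_{n\ge1}(1-\zeta_3 q^{n^2})^{-1}$. The new feature, absent when $m=2$, is that $\zeta_3$ is genuinely complex: as $\tau\to0$ one has $\Log H_2(\zeta_3;e^{-\tau})\sim\frac{\sqrt\pi}{2}\Li_{3/2}(\zeta_3)\,\tau^{-1/2}$, and since $\mathrm{Re}\,\Li_{3/2}(\zeta_3)<0$ the cusp $q=1$ actually \emph{suppresses} $H_2(\zeta_3;q)$. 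The dominant singularity sits instead at $q=\zeta_3^{2}$, i.e.\ at the Farey point $a/b=2/3$, where the twist is neutralized on the classes $n\not\equiv0\pmod 3$; the Gauss-sum bookkeeping of \eqref{definitionsumS}--\eqref{definitionLambda} shows the growth constant there to be $\frac13\big(\Li_{3/2}(\zeta_3)+2\zeta(\tfrac32)\big)$, whose real part is strictly smaller than $\zeta(\tfrac32)$. This inequality gives $|c_n|=o(p_2(n))$ at once, hence the equidistribution \eqref{Mod3} (the identity $H_2(\zeta_3;q)H_2(\zeta_3^2;q)=\prod_{n\ge1}(1-q^{n^2})/(1-q^{3n^2})$ gives an independent handle on $|c_n|$). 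Moreover the factor $q^{-n}$ evaluated at this cusp contributes the phase $e^{-4\pi i n/3}=e^{2\pi i n/3}$, which is precisely the cyclic shift $\theta_n\equiv\frac{2\pi n}{3}+\cdots$ that \eqref{Mod3Ineqs} asks for. This is the mechanism that ought to generate the three-fold cyclic ordering, and it is the exact analogue of the mod-$2$ mechanism, where the dominant cusp $1/2$ produces the clean sign $(-1)^n$.

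The hard part will be the remaining term in $\theta_n$. In the mod-$2$ situation the growth constant at the dominant cusp is \emph{real} — it is exactly the quantity $\lambda_{0,1}=A\Gamma(\tfrac12)\zeta(\tfrac32)$ computed in the proof of Lemma \ref{biglemma} — so the only surviving phase is the clean $(-1)^n$. Here, by contrast, $\mathrm{Im}\,\Li_{3/2}(\zeta_3)=\frac{\sqrt3}{2}\sum_{k\ge0}\big((3k+1)^{-3/2}-(3k+2)^{-3/2}\big)\neq0$, so the growth constant $\frac13(\Li_{3/2}(\zeta_3)+2\zeta(\tfrac32))$ is not real, the relevant saddle point $\tau'_{*}\asymp n^{-2/3}$ is complex, and a saddle evaluation yields
\[ \arg c_n=\frac{2\pi n}{3}+\gamma\,n^{1/3}+o(n^{1/3}),\qquad \gamma=\mathrm{Im}\Big(3\big(\tfrac{\sqrt\pi}{12}(\Li_{3/2}(\zeta_3)+2\zeta(\tfrac32))\big)^{2/3}\Big)\neq0. \]
This genuine drift of order $n^{1/3}$ winds $\theta_n$ around the circle and, taken at face value, pushes it out of the target arc for infinitely many $n$. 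Controlling — or correctly interpreting — this drift is the crux: one must either exhibit a cancellation that I do not see among the exponentially smaller contributions of the cusps $0/1$ and $1/3$ (whose growth constants share the real part $\mathrm{Re}\,\Li_{3/2}(\zeta_3)$ and carry opposite imaginary parts $\pm\tfrac13\mathrm{Im}\,\Li_{3/2}(\zeta_3)$), or replace \eqref{Mod3Ineqs} by an equidistribution-of-angle statement for $\theta_n$. I expect this phase analysis — invisible for $m=2$ precisely because the twist $-1$ is real — to be the main obstacle, with the circle-method and saddle-point machinery of Sections \ref{MainTerm}--\ref{Proof of Theorem 2} supplying everything else; as with \cite{Zhou}, it delivers the equidistribution far more readily than the fine sign pattern.
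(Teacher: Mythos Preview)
The statement you are trying to prove is presented in the paper as a \emph{conjecture}, not a theorem; it appears in Section~\ref{Open} among the open questions, and the paper offers no proof of it. More to the point, the paper explicitly casts doubt on the inequality block \eqref{Mod3Ineqs}: immediately after stating the conjecture the author remarks that while the inequalities hold for small $n$, ``numerical experiments seem to reveal the fact that the pattern loses its structure as $n$ grows larger, and that the signs of the inequalities change.'' Only the equidistribution \eqref{Mod3} is asserted to be true, and that is attributed to Zhou~\cite{Zhou} rather than proved in the paper.

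Your analysis is therefore not a proof of the conjecture but, in effect, a heuristic explanation of \emph{why} the inequality part should fail --- and on that score it is rather good. You correctly locate the dominant cusp for $H_2(\zeta_3;q)$ away from $q=1$, observe that the associated growth constant $\tfrac13(\Li_{3/2}(\zeta_3)+2\zeta(\tfrac32))$ is genuinely complex because $\zeta_3$ is not real, and deduce that the saddle-point phase carries a drift $\gamma n^{1/3}$ with $\gamma\neq 0$. That drift winds $\theta_n$ around the circle and destroys any fixed cyclic ordering of $p_2(0,3,n),p_2(1,3,n),p_2(2,3,n)$ for large $n$, which is exactly the breakdown the paper reports numerically. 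Your suggestion to replace \eqref{Mod3Ineqs} by an equidistribution statement for the angle $\theta_n$ is in the same spirit as the reformulation the paper proposes in Question~\ref{Q1}. So: the equidistribution argument you sketch is sound and matches what is known, but there is no gap to close in the inequalities --- the conjecture as stated is, on both your evidence and the paper's, almost certainly false.
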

Indeed, the work of Zhou \cite{Zhou} proves the  equidistribution statement from \eqref{Mod3}. However, while the inequalities \eqref{Mod3Ineqs} hold for small of values of $ n, $ numerical experiments seem to reveal the fact that the pattern loses its structure as $ n $ grows larger, and that the signs of the inequalities change. In this regard, the following variant of this question seems more reasonable. \par Let $ S\subseteq \bb N. $
If for any $ n $ we arrange the $p_S(a,m,n)$ in non-increasing order (and, in case there are $0\le i<j \le m-1$ such that  $p_S(i,m,n)=p(j,m,n),$ we write $ p_S(i,m,n) $ before $ p_S(j,m,n) $ and we keep this choice consistent throughout), we obtain an $ m $-tuple $ (p_S(i_0,m,n),\ldots,p_S(i_{m-1},m,n)), $ with $ \{i_0,i_1,\ldots,i_{m-1}\}=\{0,1,\ldots, m-1\}. $ Let we define a sequence $ \{u_n\} $ of such $ m $-tuples by setting $ u_n=(i_0,i_1,\ldots,i_{m-1}), $ where the order is taken into account, and with the arrangement choice assumed above.

\begin{Quest}\label{Q1}
For the set $ S=\{n^k:n\in\bb N\} $ of perfect $ k $th powers or, more generally, for sets $ S=\{f(n)\}_{n\ge1} $ of polynomial functions as those in \cite{Liardet,RothSzek}, do the sequence $\{u_n\}_{n\ge1}$ thus constructed become periodic?
\end{Quest} 
\begin{Quest}
If so, is the statement of Question \ref{Q1} true for  \textit{all} $ m\ge3 ?$ 
\end{Quest}
Finally, we note that, although we could not directly apply Meinardus's Theorem to our problem, we did end up nevertheless with the two similar estimates, obtaining the asymptotics that his theorem would have heuristically predicted. This naturally leads to the following question.
\begin{Quest}
Can Meinardus's Theorem be strengthened  so as to deal with a more general class of infinite product generating functions than that studied in \cite{Mein}?
\end{Quest}

\section*{Acknowledgments} The author would like to thank Kathrin Bringmann for proposing the question in which this paper found its original inspiration,  Ef Sofos for suggesting the particular problem addressed in this paper during a talk  given by the author at the Max Planck Institute for Mathematics in Bonn,  and Pieter Moree for the invitation to give that talk. In particular, the author is grateful to Igor Shparlinski for pointing out the bound \eqref{Shparli}. 
%and the anonymous referee for suggesting identity \eqref{LiardetTrick} and the paper of Liardet and Thomas \cite{Liardet}.  
Part of the work was supported by the European Research Council under the European Union's Seventh Framework Programme (FP/2007--2013)/ERC Grant agreement no. 335220 --- AQSER. The work was completed during a stay at the Max Planck Institute for Mathematics. The author would like to acknowledge the hospitality of the staff and the inspiring atmosphere.
%The work was supported by the European Research Council under the European Union's Seventh Framework Programme (FP/2007--2013)/ERC Grant agreement n. 335220 -- AQSER.


\begin{thebibliography}{99}
\bibitem{And} G. E. Andrews, \textit{The theory of partitions}, Cambridge Mathematical Library. Cambridge University Press,  1998.
\bibitem{AAR} G. E. Andrews, R. Askey and R. Roy, \textit{Special functions}, Encyclopedia of Mathematics and its Applications 71. Cambridge University Press, 1999.
\bibitem{Apostol} T. M. Apostol, \textit{Introduction to Analytic Number Theory}, Undergraduate Texts in Mathematics, Springer-Verlag, New York-Heidelberg, 1976.
\bibitem{Apostol2} T. M. Apostol, \textit{Modular Functions and Dirichlet Series in Number Theory}, Second edition, Graduate Texts in Mathematics 41. Springer-Verlag, New York, 1990.
\bibitem{BS} W. Banks and I. E. Shparlinski, On Gauss sums and the evaluation of Stechkin's constant, \textit{Math. Comp.} \textbf{85} (2015), no. 301, 2569--2581.
%\bibitem{Baxter} R. J. Baxter; A direct proof of Kim's identities, {\it J. Phys. A} {\bf 31} (3) (1998), 1105--1108.
%\bibitem{Berndt} B. C. Berndt, R. J. Evans and K. S. Williams, \textit{Gauss and Jacobi sums}, Canadian Mathematical Society Series of Monographs and Advanced Texts. A Wiley-Interscience Publication. John Wiley \& Sons, Inc., New York, 1998.
\bibitem{BM} K. Bringmann and K. Mahlburg, Transformation laws and asymptotics for nonmodular products, unpublished preprint.
\bibitem{AC} A. Ciolan, Asymptotics and inequalities for partitions into squares, \textit{Int. J. Number Theory} \textbf{16} (2020), no. 1, 121--143.
\bibitem{CP} T. Cochrane and C. Pinner, Explicit bounds on monomial and binomial exponential sums, \textit{Q.
J. Math.} \textbf{62} (2011), no. 2, 323--349.
\bibitem{CE} H. Cohn and N. Elkies, New upper bounds on sphere packings. I, \textit{Ann. of Math.} (2) \textbf{157}
(2003), no. 2, 689--714.
\bibitem{Glaisher} J. W. L. Glaisher; On formulae of verification in the partition of numbers, \textit{Proc. Royal Soc. London} \textbf{24} (1876), 250--259.
%\bibitem{HL} G. H. Hardy and J. E. Littlewood; Contributions to the Theory of the Riemann Zeta-Function and the Theory of the Distribution of Primes, \textit{Acta Math.} \textbf{41} (1) (1916), 119--196.
%\bibitem{IR} K. Ireland and M. Rosen, \textit{A Classical Introduction to Modern Number Theory}, Second edition, Springer, 1990.
\bibitem{Liardet} P. Liardet and A. Thomas, Asymptotic formulas for partitions with
bounded multiplicity, in \textit{Applied algebra and number theory. Essays in honor of Harald Niederreiter on the occasion of his 70th birthday}, Cambridge: Cambridge University Press (2014), 235--254.
\bibitem{Mein} G. Meinardus, Asymptotische aussagen \"uber Partitionen, \textit{Math. Z.} \textbf{59} (1954), 388--398.
%\bibitem{Ram}  S. Ramanujan, Congruence properties of partitions, \textit{Math. Z.} \textbf{9} (1921), 147--153.
\bibitem{RothSzek}K. F. Roth and G. Szekeres, Some asymptotic formulae in the theory of partitions, 
\textit{Q. J. Math. Oxford} \textbf{5} (1954), no. 2, 241--259.
\bibitem{Shparlinski} I. E. Shparlinski\u \i, Estimates for Gauss sums (English transl.), \textit{Math. Notes} \textbf{50} (1991), no. 1--2, 740--746 (1992).
\bibitem{Stech} S. B. Ste\v ckin, An estimate for Gaussian sums (Russian), \textit{Mat. Zametki} \textbf{17} (1975), no. 4,
579--588.
\bibitem{Titch} E. C. Titchmarsh, \textit{The theory of the Riemann zeta-function}, Second edition. Edited and with a preface by D. R. Heath-Brown, The Clarendon Press, Oxford University Press, New York, 1986.
%\bibitem{AMM} L. T\'oth; Elementary Problems: E 3432, \textit{Amer. Math. Monthly} \textbf{98} (1991), no. 3, 263--264. 	
%\bibitem{WeYa} Sh.-R. Wei and B.-Ch. Yang; An improvement of Franel's inequality (in Chinese), \textit{J. Central Univ. Nationalities} \textbf{8} (1999), no. 1, 66--68. 

\bibitem{Vaughan} R. C. Vaughan, \textit{The Hardy–Littlewood Method}, Second edition, Cambridge Tracts in Mathematics, Cambridge University Press, 1997.

\bibitem{WrightIII} E. M. Wright, Asymptotic partition formulae. III. Partitions into $k$-th powers, \textit{Acta Math.} \textbf{63} (1934), no. 1, 143--191.
%\bibitem{WuYa} K. Wu and B.-Ch. Yang; Some refinements of Franel's inequality (in Chinese), \textit{J. South China Normal Univ. Natur. Sci. Ed.} (1997), no. 3, 5--8. 	
\bibitem{Zhou} N. H. Zhou, Note on partitions into polynomials with number of parts in an arithmetic progression, available as preprint at \url{https://arxiv.org/abs/1909.13549}.

		
%\bibitem{Zagapex} D. Zagier; The Mellin transform and other useful analytic techniques
%Appendix to E. Zeidler, \textit{Quantum Field Theory I: Basics in Mathematics and Physics. A Bridge Between Mathematicians and Physicists}, Springer-Verlag, Berlin-Heidelberg-New York (2006), 305--323.
\end{thebibliography}
\end{document}